\documentclass{amsart}

\usepackage{graphicx,color}
\usepackage{amssymb}
\usepackage{amscd}
\usepackage{longtable}
\usepackage{pb-diagram}
\usepackage[all]{xy}

\newtheorem{theorem}{Theorem}[section]
\newtheorem{proposition}[theorem]{Proposition}
\newtheorem{lemma}[theorem]{Lemma}

\newtheorem{problem}[theorem]{Problem}

\newtheorem{corollary}[theorem]{Corollary}
\newtheorem*{claim*}{Claim}

\theoremstyle{definition}
\newtheorem{definition}[theorem]{Definition}
\newtheorem{example}[theorem]{Example}

\newcommand{\N}{\Bbb N}
\newcommand{\F}{\Bbb F}

\newcommand{\C}{\Bbb C}
\newcommand{\Z}{\Bbb Z}

\newcommand{\D}{\Delta}

\newcommand{\la}{\langle}
\newcommand{\ra}{\rangle}

\newcommand{\ord}{\mathcal{O}}

\newcommand{\pr}{\mathrm{pr}}

\newcommand{\MV}{\mathcal{V}}
\newcommand{\K}{\mathcal{K}}
\newcommand{\ov}{\overline}

\theoremstyle{remark}
\newtheorem{remark}[theorem]{Remark}

\numberwithin{equation}{section}

\allowdisplaybreaks[4]

\begin{document}
\title{Twisted Alexander polynomials of a knot for group extensions} 

\author{Katsumi Ishikawa, Takayuki Morifuji, and Masaaki Suzuki}

\thanks{2020 {\it Mathematics Subject Classification}. 
Primary 57K14, Secondary 57K10, 20F38.}

\thanks{{\it Key words and phrases.\/}
Twisted Alexander polynomial, group extension, TAV group.}

\begin{abstract}
In this paper, we discuss twisted Alexander polynomials of a knot for group extensions of a finite group in two directions. Firstly, we provide a mod $p$ formula for the twisted Alexander polynomial of a knot in the $3$-sphere associated with the regular representation of a finite group. Secondly, we consider twisted Alexander polynomials of a knot for a series of central extensions of a finite group. Moreover, we apply these formulas for twisted Alexander polynomials to the study of twisted Alexander vanishing groups and orders for non-fibered knots. 
\end{abstract}

\address{Research Institute for Mathematical Sciences, Kyoto University, Kyoto 606-8502, Japan}
\email{katsumi@kurims.kyoto-u.ac.jp}

\address{Department of Mathematics, Hiyoshi Campus, Keio University, Yokohama 223-8521, Japan}
\email{morifuji@keio.jp}

\address{Department of Frontier Media Science, 
Meiji University, 4-21-1 Nakano, Nakano-ku, Tokyo 
164-8525, Japan}
\email{mackysuzuki@meiji.ac.jp}

\maketitle

\section{Introduction}

The twisted Alexander polynomial was originally introduced by Lin \cite{Lin01-1} for knots in the $3$-sphere $S^3$, and by Wada \cite{Wada94-1} for finitely presentable groups. It is defined for the pair of a group (e.g., a knot group) and its linear representation, and 
gives a natural generalization of the classical Alexander polynomial. The theory of twisted Alexander polynomials has developed over the last thirty years. In fact, we can exract more various information of knots and  $3$-manifolds from twisted Alexander polynomial than Alexander polynomial. For example, it is known that fibered $3$-manifolds are detected by the set of twisted Alexander polynomials associated with the regular representations of finite groups (see \cite{FV11-1}). 
However, explicit formulas for twisted Alexander polynomials are not well known except for special classes of knots and their representations. 

In this paper, we derive formulas for  twisted Alexander polynomials of a knot for two kinds of group extensions of a finite group. Firstly, we  provide a mod $p$ formula for the twisted Alexander polynomial $\D_K^{\rho\circ f}(t)$ of a knot $K$ in $S^3$ associated with the composition of an epimorphism $f\colon G(K)\to G$ and the regular representation $\rho\colon G\to \mathrm{GL}(|G|,\Z)$ of a finite group $G$, where $G(K)$ is the fundamental group of the exterior of $K$ in $S^3$. By `mod $p$' we mean the reduction modulo $p$ of each coefficient of $\D_K^{\rho\circ f}(t)$. For a normal subgroup $H$ of $G$, we can show the following. 

\begin{theorem}\label{thm:main-1}
Let $H \subset G$ be a normal subgroup of order $p^n$ for a prime number $p$. Then for an epimorphism $f\colon G(K)\to G$, we have
$$\Delta_K^{\rho\circ f}(t) \equiv \left( \Delta_K^{\tilde{\rho}\circ f}(t) \right)^{p^n} ~(\mathrm{mod} ~p),$$
where 
$\tilde{\rho}=\rho_{H\backslash G}\circ\pi$ is the composition of 
the quotient map $\pi \colon G \to H\backslash G$ and the regular representation $\rho_{H\backslash G} \colon H\backslash G \to {\rm GL}(|H \backslash G|, \mathbb{Z})$ of the quotient group $H\backslash G$. 
\end{theorem}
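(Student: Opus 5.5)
The approach is to reduce everything to a statement about the regular representation over $\F_p$: modulo $p$ it has a filtration whose graded pieces are all isomorphic to $\tilde\rho$. I work with Wada's definition. Take a Wirtinger presentation $G(K)=\la x_1,\dots,x_k \mid r_1,\dots,r_{k-1}\ra$ and the abelianization $\alpha\colon G(K)\to\la t\ra$. Then
$$\D_K^{\rho\circ f}(t)=\det\Phi_{\rho}\Big(\frac{\partial r_i}{\partial x_j}\Big)_{j\neq 1}\Big/\det\Phi_{\rho}(x_1-1),$$
where $\Phi_\rho=(\rho\circ f)\otimes\alpha$. Both determinants are polynomials with integer coefficients, and the denominator $\det(t\rho(f(x_1))-I)$ is monic up to sign. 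So reducing modulo $p$ is compatible with the quotient, and it suffices to prove the congruence separately for the numerator and the denominator. Both are determinants of matrices obtained by applying $\Phi$ to a matrix over $\Z[G(K)]$. It therefore suffices to show the following: over $\F_p$, the representation $\rho\otimes\F_p$ of $G$ admits a $G$-stable filtration whose $p^n$ graded pieces are each isomorphic to $\tilde\rho\otimes\F_p$.

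Given such a filtration, I choose a basis adapted to it. For every $g\in G$, the matrix $\rho(g)$ mod $p$ is then simultaneously block upper triangular with $p^n$ diagonal blocks, each equal to $\tilde\rho(g)$. This structure is preserved by sums and products in the group ring tensored with $\F_p[t^{\pm1}]$. Hence for any matrix $M$ over $\Z[G(K)]$, the matrix $\Phi_\rho(M)$ mod $p$ is, after permuting rows and columns, block upper triangular with $p^n$ diagonal blocks equal to $\Phi_{\tilde\rho}(M)$ mod $p$. Its determinant is therefore $(\det\Phi_{\tilde\rho}(M))^{p^n}$ mod $p$. Applying this to the numerator and the denominator gives the theorem. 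This works for the Laurent polynomial entries because the filtration is defined over $\F_p[G]$ and we only extend scalars by $t$.

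The main step is constructing the filtration. The regular representation is $\F_p[G]$ with $G$ acting by left multiplication. Let $I_H$ be the augmentation ideal of $\F_p[H]$. Since $H$ is a $p$-group, $I_H$ is nilpotent: $I_H^{m}=0$ for some $m$. I consider the left $\F_p[G]$-submodules $F_k=\F_p[G]\,I_H^k$. Normality of $H$ makes $I_H\F_p[G]=\F_p[G]I_H$, so these are two-sided ideals, and in particular $G$-stable. Since $\F_p[G]$ is free over $\F_p[H]$, we have
$$F_k/F_{k+1}\cong \F_p[G]\otimes_{\F_p[H]}\big(I_H^k/I_H^{k+1}\big),$$
and $H$ acts trivially on $I_H^k/I_H^{k+1}$, which has dimension $d_k$ with $\sum d_k=p^n$. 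It remains to show that each such quotient is isomorphic, as a $G$-module, to a direct sum of $d_k$ copies of $\F_p[H\backslash G]$; I expect this to be the main obstacle. The action of $G$ on $I_H^k/I_H^{k+1}$ by conjugation need not be trivial, so this decomposition is not immediate.

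To avoid that issue, I would refine the filtration. Take the right $\F_p[H]$-module $\F_p[H]$, which has a composition series with all factors trivial of dimension $1$ since $H$ is a $p$-group. Inducing along $\F_p[G]\otimes_{\F_p[H]}-$, using the isomorphism $\F_p[G]\cong\F_p[G]\otimes_{\F_p[H]}\F_p[H]$ of left $G$-modules, gives a chain of left $G$-submodules of $\F_p[G]$. The induction is exact because $\F_p[G]$ is free as a right $\F_p[H]$-module. Each factor is $\F_p[G]\otimes_{\F_p[H]}\F_p=\F_p[G/H]$, which is the permutation module on cosets, i.e.\ $\tilde\rho\otimes\F_p$. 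Normality of $H$ identifies $G/H$ with $H\backslash G$. This yields exactly $p^n$ factors, each isomorphic to $\tilde\rho$ mod $p$, and it avoids the conjugation problem altogether. Only the composition series of $\F_p[H]$ as a module over itself is needed, and the key checks are that the inclusions survive tensoring (by flatness) and that the factors are the trivial module.
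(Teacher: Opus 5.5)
Your argument is correct and is essentially the paper's proof: you take a filtration of $\F_p[H]$ with trivial one-dimensional factors, induce it up to $\F_p[G]$ (exactly, since $\F_p[G]$ is free over $\F_p[H]$) so that each of the $p^n$ graded pieces is the permutation module on the cosets of $H$, and finish with the block-triangular determinant computation, which is Lemma~\ref{lem:BF}(ii) applied over $\F_p$. The only differences are that the paper constructs the trivial-factor filtration of $\F_p[H]$ by hand (Lemma~\ref{modp-lem} for elementary abelian groups, then Lemma~\ref{pgp-lem} by induction on $|H|$) where you quote that the only simple $\F_p[H]$-module is trivial, and that your worry about the $I_H^k$-filtration was unnecessary: the $G$-action on $\F_p[G]\otimes_{\F_p[H]}M$ comes only from left multiplication on $\F_p[G]$, so that filtration already has graded pieces $\F_p[G/H]^{d_k}$.
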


If we apply Theorem \ref{thm:main-1} to the dihedral group $D_{p^n}=C_{p^n}\rtimes C_2$ of order $2p^n$ for an odd prime number $p$, we obtain the following explicit formula for $\D_K^{\rho\circ f}(t)$. Here, $C_m$ is a cyclic group of order $m$, and let $\D_K(t)$ be the Alexander polynomial of $K$. 

\begin{corollary}\label{cor:dihedral}
For a given epimorphism $f\colon G(K) \to D_{p^n}$, we have
$$
\D_{K}^{\rho\circ f}(t)
\equiv
\left(
\frac{\D_K(t)}{t-1}\cdot
\frac{\D_K(-t)}{t+1}
\right)^{p^n}
~(\mathrm{mod}~ p).
$$
\end{corollary}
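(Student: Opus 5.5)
Apply Theorem \ref{thm:main-1} with $G=D_{p^n}=C_{p^n}\rtimes C_2$ and $H=C_{p^n}$. This $H$ is the rotation subgroup: it is normal of index $2$ and has order $p^n$. The quotient $H\backslash G$ is $C_2$, and $\tilde{\rho}=\rho_{C_2}\circ\pi$. Theorem \ref{thm:main-1} gives
$$\D_K^{\rho\circ f}(t)\equiv\left(\D_K^{\rho_{C_2}\circ\pi\circ f}(t)\right)^{p^n}~(\mathrm{mod}~p).$$
It therefore remains to compute, over $\Z$, the twisted Alexander polynomial of $K$ for the $2$-dimensional representation $\rho_{C_2}\circ\pi\circ f$.

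First identify $\pi\circ f\colon G(K)\to C_2$. It is surjective because $f$ is. Since $C_2$ is abelian, $\pi\circ f$ factors through the abelianization $H_1(E_K)\cong\Z$, generated by a meridian. Hence $\pi\circ f$ is the unique epimorphism sending every meridian to the generator $s$ of $C_2$; concretely, meridians go to reflections in $D_{p^n}$.

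Next diagonalize $\rho_{C_2}$. In the basis $\{1+s,\,1-s\}$ of $\Z[C_2]\otimes\mathbb{Q}$, the matrix of $s$ becomes $\mathrm{diag}(1,-1)$. Wada's invariant is unchanged under conjugation by a matrix in $\mathrm{GL}(2,\mathbb{Q})$, up to the usual units. We therefore get a factorization
$$\D_K^{\rho_{C_2}\circ\pi\circ f}(t)=\D_K^{\mathbf 1}(t)\cdot\D_K^{\chi}(t),$$
where $\mathbf 1$ is the trivial character and $\chi$ is the sign character sending each meridian to $-1$. Compute each factor with a Wirtinger presentation, where $\alpha$ sends every meridian to $t$:

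\begin{itemize}
\item For the trivial character, the standard computation gives $\D_K(t)/(t-1)$.
\item For $\chi$, every generator $x_i$ maps to $-t$. The twisted Fox matrix is therefore the ordinary Fox matrix evaluated at $-t$, and the denominator $\det\Phi(x_i-1)=-t-1$. This factor equals $\D_K(-t)/(-t-1)$, which is $\D_K(-t)/(t+1)$ up to sign.
\end{itemize}

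Substituting the product of the two factors into the congruence and raising to the $p^n$-th power gives the corollary.

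The main obstacle is keeping track of indeterminacy. The twisted Alexander polynomial is defined only up to units $\pm t^{k}$, and the conjugation is over $\mathbb{Q}$ rather than $\Z$. So one must check that the $\Z$-valued invariant of the $2$-dimensional representation really equals the product above, and not merely up to a rational scalar. This should follow from the fact that $\det$ of the change-of-basis matrix cancels between the numerator and the denominator of Wada's quotient.

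Under reduction mod $p$ with $p$ odd, the remaining ambiguity is a sign $\pm1$ and a power of $t$. Raising to the odd power $p^n$ preserves this form, so the congruence holds with the same conventions as in Theorem \ref{thm:main-1}.
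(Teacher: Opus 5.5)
Your proposal is correct and is essentially the paper's proof: you apply Theorem~\ref{modp-thm} with $H=C_{p^n}=D_{p^n}'$, so that $\tilde\rho\circ f$ is the regular representation of $C_2$ composed with the unique epimorphism $G(K)\to C_2$. You then identify that factor with $\frac{\D_K(t)}{t-1}\cdot\frac{\D_K(-t)}{t+1}$; the paper reads this off from (\ref{eq:cyclic}) with $m=2$, $\alpha=-1$, whereas you rederive it by diagonalizing $\rho_{C_2}$ and doing the Fox calculus by hand. The indeterminacy you worry about is harmless, and is even smaller than you suggest: conjugating every block by the same $P\in\mathrm{GL}(2,\mathbb{Q})$ leaves $\det M_j$ and $\det\Phi(s_j-1)$ each exactly unchanged, so no cancellation between numerator and denominator is needed.
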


This formula is a generalization of a result due to Boden and Friedl \cite[Theorem 2(4)]{BF14-1} in the sense that when $n=1$ they just provided the above mod $p$ formula for the twisted Alexander polynomial  of a knot $K$ associated with the permutation representation of the symmetric group $S_p$. We also derive similar formulas for several interesting finite groups in Section \ref{sec:modp}. 

Secondly, let us consider a central extension 
$1\rightarrow C_n \rightarrow G_{k,n} \overset{\pr}{\longrightarrow} G_{k,1} \rightarrow 1$ by a finite cyclic group $C_n$, 
where $G_{k,n}$ is defined as the pull-back of two epimorphisms $\pi\colon G_{k,1}\to C_k$; the abelianization homomorphism, and $\pi_k\colon C_{kn}\to C_k$; a natural projection, where $k$ and $n$ are positive integers (see Section \ref{sec:extension} for details). 
We denote the regular representation of the group $G_{k,n}$ by $\rho_n\colon G_{k,n} \to\mathrm{GL}(kn|H|,\Z)$, where $H$ is the kernel of $\pi$, and the composition $\rho_1\circ \mathrm{pr}$ by $\tilde{\rho}_1$. When there exists an epimorphism $f_n\colon G(K) \to G_{k,n}$, we have the following formula. 

\begin{theorem}\label{thm:main-2}
$\displaystyle{
\D_K^{\rho_n\circ f_n}(t)
=
\prod_{j=0}^{n-1}
\D_K^{\tilde{\rho}_1 \circ f_n}\big(e^{\frac{2\pi i}{kn}j}t\big).}$
\end{theorem}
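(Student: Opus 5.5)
We will decompose the regular representation $\rho_n$ of $G_{k,n}$ into pieces indexed by characters of the central subgroup $C_n$, and compare each piece with $\tilde{\rho}_1$ after rescaling the variable $t$.

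First we make the group explicit. Recall that
$$G_{k,n}=\{(g,c)\in G_{k,1}\times C_{kn} : \pi(g)=\pi_k(c)\}.$$
The central subgroup $C_n=\ker\pi_k$ sits inside $G_{k,n}$ as $\{(1,c)\}$. The composite of $f_n$ with the projection to $C_{kn}$ is a homomorphism $G(K)\to C_{kn}$. It factors through the abelianization $G(K)\to\Z=\langle t\rangle$, so a meridian maps to a generator $c_0=(1\bmod kn)$ (after choosing generators appropriately).

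Next we decompose the representation. Over $\C$, the regular representation of $G_{k,n}$ is the induced representation $\mathrm{Ind}_{\{1\}}^{G_{k,n}}\mathbf{1}$. Since $C_n$ is central, the group algebra $\C[G_{k,n}]$ splits as $\bigoplus_{\chi}\C[G_{k,n}]e_\chi$, where $\chi$ runs over the $n$ characters of $C_n$ and $e_\chi$ are the corresponding central idempotents. Each summand $V_\chi$ has dimension $k|H|=|G_{k,1}|$, and we have $\rho_n\cong\bigoplus_\chi V_\chi$. Twisted Alexander polynomials (as Wada invariants) are multiplicative under direct sums and invariant under conjugation. Hence
$$\D_K^{\rho_n\circ f_n}(t)=\prod_\chi \D_K^{V_\chi\circ f_n}(t).$$
The trivial character gives $V_1=\C[G_{k,1}]$ pulled back by $\pr$, that is, $\tilde\rho_1$.

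The key step is to identify $V_\chi$ with $\tilde\rho_1\otimes\lambda_\chi$, where $\lambda_\chi$ is a one-dimensional representation of $G_{k,n}$ that factors through $C_{kn}$. Let $\omega=e^{2\pi i/kn}$ and take $\lambda_j(g,c)=\omega^{jc}$ for $j=0,\dots,n-1$. Restricted to $C_n=\{(1,km)\}$, this gives $\omega^{jkm}=e^{2\pi i jm/n}$, and these exhaust all characters of $C_n$ as $j$ varies. Consider the map $\C[G_{k,1}]\otimes\lambda_j\to V_{\chi_j}$ sending $g\mapsto\sum_{c:\,\pi_k(c)=\pi(g)}\omega^{-jc}(g,c)$, suitably normalized. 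A direct check should show that it is a $G_{k,n}$-equivariant isomorphism. Then
$$\D_K^{(\tilde\rho_1\otimes\lambda_j)\circ f_n}(t)=\D_K^{\tilde\rho_1\circ f_n}(\omega^{j}t),$$
because tensoring with a character factoring through the abelianization amounts to substituting $t\mapsto\lambda_j(\text{meridian})\,t=\omega^j t$. This holds both in Fox calculus for the numerator and for the denominator $\det\Phi(x-1)$. Taking the product over $j$ gives the theorem.

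The main obstacle will be this identification $V_{\chi_j}\cong\tilde\rho_1\otimes\lambda_j$: writing the intertwiner carefully and checking equivariance. A cleaner route may be a character computation. The character of $\tilde\rho_1\otimes\lambda_j$ at $(g,c)$ is $|G_{k,1}|\delta_{g,1}\,\omega^{jc}$, which is nonzero only when $c\in C_n$. Summing over $j=0,\dots,n-1$ gives $n|G_{k,1}|\delta_{g,1}\delta_{c,0}$, the regular character, since $\sum_j\omega^{jc}$ for $c=km$ vanishes unless $m\equiv0$. This identity of characters already proves $\rho_n\cong\bigoplus_j\tilde\rho_1\otimes\lambda_j$ over $\C$, with no need to find explicit idempotents. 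A secondary point is to check that the normalization of the twisted Alexander polynomial (a quotient, defined up to units) is compatible with the product formula, which follows from multiplicativity for block-diagonal matrices.
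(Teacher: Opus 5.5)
Your proposal is correct, and its character-computation version is exactly the paper's argument: the paper shows that $\bigoplus_{j=0}^{n-1}\tau_j\otimes\tilde{\rho}_1$ (with $\tau_j=\eta_j\circ\pr_2$, which is your $\lambda_j$) has the regular character via the root-of-unity sum, then uses multiplicativity over direct sums and the substitution $t\mapsto e^{\frac{2\pi i}{kn}j}t$ that comes from tensoring with $\tau_j\circ f_n$. The idempotent/intertwiner route is unnecessary, and your choice of the generator of $C_{kn}$ as the image of the meridian makes explicit a normalization that the paper leaves implicit.
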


Namely, the twisted Alexander polynomial of the $n$-th central extension $G_{k,n}$ is determined by that of $G_{k,1}$. As we will explain below, the formula leads an important consequence with respect to TAV groups and TAV orders. 

Another motivation of this paper comes from applications of these formulas to the study of twisted Alexander vanishing groups and orders of knots introduced in \cite{IMS23-1}. We call a finite group $G$ a \textit{twisted Alexander vanishing (TAV) group of a knot $K$} if there exists an epimorphism $f\colon G(K)\to G$ such that $\D_K^{\rho\circ f}(t)$ is zero. By abuse of terminology, we also call a finite group $G$ a \textit{TAV group} if $G$ is a TAV group of some knot $K$. 

A knot $K$ is called  \textit{fibered} if the complement of $K$ in $S^3$ admits a structure of a surface bundle over the circle such that the closures of the fiberes are Seifert surfaces. Then a vanishing theorem for non-fibered knots due to Friedl and Vidussi \cite[Theorem 1.2]{FV13-1} implies that every non-fibered knot admits a TAV group. 

Next, we define the \textit{twisted Alexander vanishing order} $\ord(K)$ of a non-fibered knot $K$ to be the order of the smallest TAV group of $K$, 
and call it the \textit{TAV order} of $K$ in short (we call it the \textit{minimal order} of $K$ in \cite{MS22-1}). For a fibered knot $K$, we set $\ord(K)=+\infty$, because its twisted Alexander polynomial is monic 
(see \cite{Cha03-1}, \cite{GKM05-1}), 
and hence never vanishes. 
Thus, for the set $\K$ consisting of oriented knots in $S^3$, we can obtain the \textit{TAV order function} $\ord\colon\K\to\N\cup\{+\infty\}$. 

In \cite{IMS23-1}, we provided a characterization of a TAV group. In order to state the result precisely, 
we recall some terminologies in group theory. 
The \textit{weight} of a group $G$, denoted by $w (G)$, is the smallest integer $n$ such that $G$ is the normal closure of $n$ elements. We also set $w(\{e\})=0$. For a group $G$, there are a knot $K$ and an epimorphism $f\colon G(K)\to G$ if and only if $G$ is finitely generated and $w(G)\leq1$ (see \cite{GA75-1}, \cite{Johnson80-1}). A finite group $G$ is a \textit{$p$-group} if and only if the order $|G|$ is a power of a prime number. We denote the commutator subgroup of $G$ by $G'$. Then we have:

\begin{theorem}[{\cite[Theorem~1.5]{IMS23-1}}]\label{thm:main-4}
A finite group $G$ is a TAV group if and only if 
$w(G)=1$ and $G'$ is not a $p$-group. 
\end{theorem}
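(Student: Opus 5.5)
We prove both directions separately. Throughout, $\D_K^{\rho\circ f}(t)$ is computed from a Wirtinger presentation of $G(K)$, with the Fox Jacobian twisted by $\rho\circ f$ and the abelianization $\alpha$.

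\emph{Necessity.} Suppose $G$ is a TAV group of $K$ via an epimorphism $f\colon G(K)\to G$. Since $G(K)$ is normally generated by a meridian, so is $G$; hence $w(G)\le 1$. If $G$ were trivial, $\rho\circ f$ would be the trivial one-dimensional representation and $\D_K^{\rho\circ f}(t)=\D_K(t)/(t-1)\neq 0$. So $w(G)=1$.

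Now suppose $G'$ is a $p$-group; we want $\D_K^{\rho\circ f}(t)\neq 0$. Apply Theorem~\ref{thm:main-1} with $H=G'$, which is normal of order $p^n$:
\[
\D_K^{\rho\circ f}(t)\equiv\bigl(\D_K^{\tilde\rho\circ f}(t)\bigr)^{p^n} \pmod p .
\]
The quotient $G/G'$ is abelian and normally generated by one element, hence cyclic, say $C_m$. The regular representation of $C_m$ composed with the abelianization diagonalizes over $\C$, so
\[
\D_K^{\tilde\rho\circ f}(t)=\prod_{\zeta^m=1}\D_K(\zeta t)\big/(t^m-1).
\]
Its numerator is $\prod_\zeta \D_K(\zeta t)$, a polynomial in $t^m$. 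It is an integer polynomial whose value at $t=1$ is $\prod_\zeta\D_K(\zeta)$. Because $\D_K(1)=\pm1$, the product $\prod_\zeta\D_K(\zeta)$ is a resultant-type norm.

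The key step, and the one I expect to be the main obstacle, is to show that this polynomial is nonzero mod $p$. Rather than working at roots of unity, I would argue directly with the leading coefficient, or with the reduction of $\D_K(t)$ mod $p$. Since $\D_K(1)=\pm1$, the reduction $\bar\D_K(t)\in\F_p[t]$ is nonzero. Therefore $\prod_\zeta\bar\D_K(\zeta t)$, computed in $\ov{\F}_p[t]$ with $\zeta$ ranging over the $m$-th roots of unity, is nonzero. One must take care when $p\mid m$, since the roots then repeat with multiplicity, but the factors are still nonzero and $\ov{\F}_p[t]$ is an integral domain, so the product stays nonzero.

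Also $(t^m-1)$ divides $\prod_\zeta \D_K(\zeta t)$ over $\Z$, so the quotient is a nonzero integer polynomial whose reduction mod $p$ is the nonzero quotient. Hence $\D_K^{\rho\circ f}(t)\not\equiv 0$ mod $p$, and in particular it is nonzero. This gives necessity.

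\emph{Sufficiency.} Given $G$ with $w(G)=1$ and $G'$ not a $p$-group, we must construct a knot $K$ and an epimorphism $f$ with $\D_K^{\rho\circ f}(t)=0$. Since $w(G)=1$, there exist knots with epimorphisms onto $G$ by \cite{GA75-1,Johnson80-1}. The plan is to modify such a knot, for instance by a connected sum with a suitable non-fibered knot or by a satellite construction, so that some irreducible constituent $\sigma$ of $\rho$ nontrivial on $G'$ makes $\D_K^{\sigma\circ f}(t)$ vanish. Since $\D_K^{\rho\circ f}$ factors over the constituents of $\rho$, this suffices.

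The vanishing is governed by the twisted homology $H_1(X_K;\C[t^{\pm1}]^{\sigma})$ having positive rank. Take two distinct primes $p\ne q$ dividing $|G'|$, and pick a $p$-Sylow and a $q$-Sylow subgroup of $G'$. Use them to build a representation whose twisted Alexander module has a free part; this is where the non-$p$-group hypothesis enters, and it is the analogue of the Friedl–Vidussi argument \cite{FV13-1}. For the detailed construction I would follow \cite{IMS23-1}.
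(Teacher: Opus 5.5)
Your necessity direction follows the paper's own argument for that half (Corollary~\ref{cor:onlyif}, made precise in Corollary~\ref{cor:modp-TAP}). You apply Theorem~\ref{modp-thm} with $H=G'$, compute the cyclic quotient $G/G'=C_m$ mod $p$, and get nonvanishing from $\ov{\D_K}\neq 0$. Handling $p\mid m$ through repeated roots of $s^m-1$ in $\overline{\mathbb{F}}_p$ is Proposition~\ref{pro:cyclic} in another form. You should still say why the integer norm $\prod_\zeta\D_K(\zeta t)$ reduces to that product, e.g.\ by writing it as a resultant with the monic $s^m-1$.

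One step is false: $t^m-1$ does not divide $\prod_\zeta\D_K(\zeta t)$ over $\Z$. That divisibility holds exactly when $\D_K$ vanishes at some $m$-th root of unity. This never happens for $m=1$, where it would force $\D_K(1)=0$, nor for any prime power $m$. So $\D_K^{\tilde\rho\circ f}(t)$ is a genuine rational function. The correct finish is that its numerator has nonzero reduction and its denominator $t^m-1$ is monic, so its image in $\F_p(t)$, and hence its $p^n$-th power, is nonzero.

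The genuine gap is sufficiency. You never produce, for a given $G$ with $w(G)=1$ and $G'$ not a $p$-group, a knot $K$ and an epimorphism $f$ with $\D_K^{\rho\circ f}(t)=0$. The final deferral to \cite{IMS23-1} is a deferral to the source of the theorem itself; the present paper likewise only reproves the necessity direction. The routes you sketch do not work as stated:
\begin{itemize}
\item Connected sum. The Fox-calculus computation gives $\D^{\rho\circ f}_{K_0\# J}=\D^{\rho\circ f}_{K_0}\,\D^{\rho\circ f}_{J}\,\det(\rho(f(\mu_K))t-I)$ up to units. So a connected sum vanishes only if a summand already vanishes for its image subgroup; it transports vanishing but never creates it.
\item Friedl--Vidussi \cite{FV13-1}. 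Their theorem gives, for a given non-fibered knot, some finite group. That is the wrong order of quantifiers when $G$ is prescribed.
\item Sylow subgroups for two primes. Choosing them does not by itself yield any representation with non-torsion twisted homology.
\end{itemize}

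What must be built is a pair $(K,f)$ for which the $G'$-cover of the infinite cyclic cover of $E_K$ has infinite-dimensional $H_1(\,\cdot\,;\mathbb{Q})$. A winding-number-zero satellite does this when $G'$ contains an element $g$ whose order $d$ is not a prime power. Take a knot $P$ with an epimorphism $G(P)\to G$, and an unknotted curve $c$ in its complement with $\mathrm{lk}(c,P)=0$ mapping to a conjugate of $g$. Replace the solid torus $S^3\setminus\nu(c)$ by a neighbourhood of a companion $J$ whose Alexander polynomial is divisible by the $d$-th cyclotomic polynomial. Mayer--Vietoris then shows that $H_1(E_J)$, twisted by $\mu_J\mapsto\lambda$ with $\lambda$ a primitive $d$-th root of unity, is nonzero and survives in $H_1(E_K;\C(t)\otimes\C[G])$.

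This cannot reach groups such as $S_4$. Every element of $G'=A_4$ has prime-power order, and an Alexander polynomial never vanishes at a root of unity of prime-power order. Some further idea is needed for such groups, and supplying it is precisely the content your proposal leaves out.
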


As a corollary of Theorem \ref{thm:main-1}, we can provide a short proof of `only if' part of Theorem \ref{thm:main-4}. Note that if $w(G)\not=1$, then there is no epimorphism $f\colon G(K)\to G$ for any knot $K$, or $G$ is the trivial group $\{e\}$. 

\begin{corollary}\label{cor:onlyif}
Let $G$ be a finite group of weight one such that $G'$ is a $p$-group. Then for an epimorphism $f\colon G(K)\to G$, the reduction modulo $p$ of $\D_K^{\rho\circ f}(t)$ is described using the Alexander polynomial of $K$. In particular, $\D_K^{\rho\circ f}(t)$ is nonzero. 
\end{corollary}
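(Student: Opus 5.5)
Apply Theorem \ref{thm:main-1} with $H=G'$, which is a normal subgroup of order $p^n$ for some $n\ge 0$ by hypothesis. This gives
$$\D_K^{\rho\circ f}(t)\equiv\left(\D_K^{\tilde\rho\circ f}(t)\right)^{p^n}~(\mathrm{mod}~p),$$
where $\tilde\rho=\rho_{G'\backslash G}\circ\pi$. The quotient $G/G'$ is the abelianization of $G$. It is a finite abelian group of weight at most one, hence cyclic, say $G/G'\cong C_m$. The composition $\pi\circ f\colon G(K)\to C_m$ is surjective, so it factors through the abelianization $G(K)\to\Z=\la t\ra$, sending a meridian to a generator $g$ of $C_m$.

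Next I compute $\D_K^{\tilde\rho\circ f}(t)$. The regular representation of $C_m$ over $\C$ diagonalizes as the direct sum of the one-dimensional characters $g\mapsto \zeta^j$, $j=0,\dots,m-1$, with $\zeta=e^{2\pi i/m}$. The twisted Alexander polynomial (Wada's invariant, i.e.\ a ratio of determinants) is multiplicative under direct sums and invariant under conjugation of the representation. The one-dimensional abelian representation sending the meridian to $\zeta^j$ has twisted Alexander polynomial $\D_K(\zeta^j t)/(\zeta^j t-1)$, by the standard formula for abelian twists. Hence
$$\D_K^{\tilde\rho\circ f}(t)=\prod_{j=0}^{m-1}\frac{\D_K(\zeta^j t)}{\zeta^j t-1}.$$
The denominator is $\prod_j(\zeta^jt-1)=\pm(t^m-1)$, so this equals $\pm\prod_j\D_K(\zeta^jt)/(t^m-1)$, which is a rational expression in $\Z[t^{\pm1}]$ determined by $\D_K$. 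Since $\tilde\rho\circ f$ factors through $C_m$, one can alternatively cite the known formula for twisted Alexander polynomials of the regular representation of a cyclic group, to avoid the change of coefficients.

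For nonvanishing, note that $\D_K(s)$ is a nonzero polynomial, so $\D_K(\zeta^j t)\neq 0$ and the product above is nonzero. Moreover $\D_K(1)=\pm1$, so modulo $p$ the polynomial $\D_K$ is still nonzero, with $\D_K(1)\equiv\pm1$. Taking the resultant form $\prod_j\D_K(\zeta^jt)=\mathrm{Res}_s(s^m-t^m,\D_K(s))$ up to sign exhibits it as an integer polynomial in $t^m$. Its reduction mod $p$ is nonzero, because at $t=1$ it becomes $\prod_j\D_K(\zeta^j)$, which is the order of $H_1$ of the $m$-fold cyclic branched cover, up to sign. That quantity is coprime to $p$ when $m$ is a power of $p$, but not in general.

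This last point is the expected obstacle, so I will argue instead with leading coefficients. The leading coefficient of $\prod_j\D_K(\zeta^jt)$ in $t$ is $\pm c^m$, where $c$ is the leading coefficient of $\D_K$. That may be divisible by $p$ as well. The robust fix is to use $\D_K(t)\bmod p$ directly. Over $\ov{\F}_p$, the polynomial $\D_K$ is nonzero and $\prod_j \D_K(\zeta^j t)$ is a product of nonzero polynomials, with $\zeta$ now a root of unity in $\ov{\F}_p$, possibly with repetitions when $p\mid m$. This is valid because the resultant identity holds over any ring, so the reduction of $\mathrm{Res}_s(s^m-t^m,\D_K(s))$ mod $p$ is the resultant over $\F_p$, which is a product of nonzero factors in the splitting field. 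Hence $\D_K^{\tilde\rho\circ f}(t)\not\equiv0$, so its $p^n$-th power is nonzero mod $p$, and therefore $\D_K^{\rho\circ f}(t)\neq0$.
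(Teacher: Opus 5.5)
Your proof is correct. Its first half is the paper's: Theorem~\ref{thm:main-1} with $H=G'$ reduces everything to the cyclic quotient $G/G'\cong C_m$.

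The two routes differ in how the cyclic factor is handled.

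The paper (Proposition~\ref{pro:cyclic}, Corollary~\ref{cor:modp-TAP}) stays in characteristic $p$ throughout. Writing $m=p^kl$ with $p\nmid l$, it applies Theorem~\ref{modp-thm} a second time to the Sylow subgroup $C_{p^k}\subset C_m$. It then diagonalizes the regular representation of $C_l$ over $\mathbb{F}_{p^d}$ by Maschke's theorem, landing directly on $\bigl(\prod_{j=0}^{l-1}\overline{\Delta_K}(\zeta^jt)/(\zeta^jt-1)\bigr)^{p^{k+n}}$.

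You instead compute the cyclic factor in characteristic $0$ via (\ref{eq:cyclic}). You then transport it to characteristic $p$ through the norm $\prod_j\Delta_K(\zeta^jt)=\mathrm{Res}_s(s^m-t^m,\Delta_K(s))$, whose product formula survives reduction because $s^m-t^m$ stays monic in $s$. The inseparability $s^m-t^m\equiv(s^l-t^l)^{p^k}$ then appears only as multiplicities, and expanding your product over $\overline{\mathbb{F}}_p$ recovers exactly the paper's formula. Your version uses the main theorem once and produces an explicit integral object. The paper's version never has to compare an integral computation with a complex one.

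That comparison is the one point you should make explicit. The $\mathbb{C}$-identities you invoke (conjugation invariance, direct sums, the abelian formula) are stated only up to units of $\mathbb{C}$. To reduce mod $p$ you need the discrepancy with the $\mathbb{Z}$-computation to be $\pm t^a$, not a rational number divisible by $p$. This is easy here. Every meridian maps to the same generator, so $M_j$ is the classical Alexander matrix with $t$ replaced by $tP$, where $P$ is the cyclic permutation matrix. Hence $\det M_j=\pm t^b\det\Delta_K(tP)=\pm t^b\prod_j\Delta_K(\zeta^jt)$ exactly, and $\det\Phi(s_j-1)=\pm(t^m-1)$.

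The two abandoned attempts (evaluating at $t=1$, and leading coefficients) can simply be deleted.
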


On the other hand, using Theorem \ref{thm:main-2}, we can derive  some information on the image of the TAV order function $\ord|_{\mathcal{N}}\colon \mathcal{N}\to \N$, where $\mathcal{N}\subset \K$ is the set of non-fibered knots. More precisely, there exists a positive integer which is the order of a TAV group, but never realize the TAV order of any non-fibered knot. As a partial answer to a problem proposed in \cite{IMS23-1}, we have: 

\begin{corollary}\label{cor:image}
The set of orders of TAV groups contains $\mathrm{Im}\,\ord|_{\mathcal{N}}$ as a proper subset.
\end{corollary}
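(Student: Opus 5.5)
The plan is to exhibit one integer $N$ that is the order of a TAV group but is not the TAV order of any non-fibered knot. The inclusion $\mathrm{Im}\,\ord|_{\mathcal N}\subset\{|G| : G \text{ a TAV group}\}$ holds by definition, since $\ord(K)$ is the order of some TAV group of $K$. So everything reduces to finding such an $N$, and I would take it of the form $N=|G_{k,n}|=kn|H|$ with $n\ge 2$.

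\textbf{Step 1: the extended groups are TAV groups.} Start with a TAV group $G_{k,1}$ of small order. It should have abelianization $C_k$ and commutator subgroup $H$ that is not a $p$-group, and Theorem~\ref{thm:main-4} certifies that it is a TAV group. A natural choice is $k=2$ with $G_{2,1}$ a generalized dihedral group $A\rtimes C_2$, where $|A|$ is divisible by two distinct primes; another candidate is $k=3$ with $G_{3,1}=(C_2\times C_2)\rtimes C_9$-type groups. Next check that $G_{k,n}$ still has weight one. Its abelianization is $C_{kn}$, since the pull-back along $C_{kn}\to C_k$ is generated by the pull-back of a normal generator of $G_{k,1}$ together with the central $C_n$. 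Also $G_{k,n}'=H$, which is not a $p$-group. So Theorem~\ref{thm:main-4} again shows that $G_{k,n}$ is a TAV group of order $kn|H|$.

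\textbf{Step 2: any vanishing at $G_{k,n}$ descends.} Suppose $f_n\colon G(K)\to G_{k,n}$ is an epimorphism with $\D_K^{\rho_n\circ f_n}(t)=0$. By Theorem~\ref{thm:main-2}, this twisted polynomial is a product of rotated copies of $\D_K^{\tilde\rho_1\circ f_n}$. Since $\C[t]$ is a domain, some factor vanishes, and therefore $\D_K^{\tilde\rho_1\circ f_n}(t)=0$. Now $\tilde\rho_1\circ f_n=\rho_1\circ(\pr\circ f_n)$, and $\pr\circ f_n$ is an epimorphism onto $G_{k,1}$. Hence $G_{k,1}$ is also a TAV group of $K$, so $\ord(K)\le |G_{k,1}|<N$ for every knot $K$ admitting $G_{k,n}$ as a TAV group.

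\textbf{Step 3 (main obstacle): no other group of order $N$ can be the smallest TAV group.} It remains to rule out every other TAV group $G$ with $|G|=N$. For each such $G$ I want to show that any knot having $G$ as a TAV group also has a strictly smaller TAV group. The approach is to choose $N$ small, for instance $N=2\cdot|G_{2,1}|$ with $|G_{2,1}|$ minimal such as $2\cdot 15$ or the order-$12$/$24$ range, and then classify all groups of order $N$ with weight one and non-$p$-group commutator using Theorem~\ref{thm:main-4}. For each group on the list I would either recognize it as some $G_{k,n}$, and apply Step 2, or find a structural reduction. One such reduction is Theorem~\ref{thm:main-1}: it shows that quotienting by a normal $p$-subgroup preserves vanishing of the mod $p$ reduction, but not of the polynomial itself, so it is only suggestive. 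The more promising reduction is a direct-product or central-extension decomposition to which a Theorem~\ref{thm:main-2}-type factorization applies. The delicate part is choosing $N$ so that every TAV group of that order is of the form $G_{k,n}$ with $n\ge2$. I would search among orders where the only weight-one groups with non-$p$-group commutator are central cyclic extensions pulled back from the abelianization; a product of a TAV group of order $|G_{k,1}|$ with a cyclic group of coprime order is the first candidate to test.
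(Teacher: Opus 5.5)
\textbf{Gap: Step 3 is never carried out.} Your Steps 1 and 2 are correct and are exactly the paper's mechanism; they amount to (ii)$\Rightarrow$(i) of Corollary \ref{cor:smallest}. The gap is Step 3, which is where the real content of the statement lies. The corollary is about \emph{orders}, so you must exhibit one specific $N$ and check that \emph{every} TAV group of order $N$ descends to a strictly smaller TAV group. You never do this.

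Moreover, the candidates you suggest fail:
\begin{itemize}
\item For $N=60$ (coming from $D_{15}$), $A_5$ is a TAV group of order $60$ with trivial center. By Proposition \ref{pro:seed} it is a seed, not some $G_{k,n}$ with $n\ge2$. Nothing available shows that a knot with $A_5$ as a TAV group has a smaller one.
\item For $N=48$ (coming from $S_4$), $\mathrm{GL}_2(\F_3)$ and $\mathrm{CSU}_2(\F_3)$ are TAV groups of order $48$ whose centers lie inside their commutator subgroup $\mathrm{SL}_2(\F_3)$. So they are again seeds, and Step 2 does not apply to them.
\item Products with coprime cyclic groups hit the same obstruction: order $120$ contains $S_5$ and $\mathrm{SL}_2(\F_5)$, and order $210$ contains $D_{105}$.
\item Your $k=3$ candidate $(C_2\times C_2)\rtimes C_9$ is not a TAV group at all, since its commutator subgroup is a $2$-group.
\item There are no TAV groups of order below $24$ (Theorem \ref{thm:24}), so the ``order-$12$'' range is empty.
\end{itemize}
As you note yourself, Theorem \ref{thm:main-1} cannot fill the gap either.

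\textbf{The missing idea.} Choose an order with so few groups that the central extension is forced to be the only TAV group of that order. The paper takes $N=132$.
\begin{itemize}
\item $\mathrm{Dic}_{33}\cong C_{33}\rtimes C_4$ is the $n=2$ central extension of $D_{33}$.
\item It is a TAV group by Theorem \ref{thm:main-4}.
\item It is the \emph{unique} TAV group of order $132$. The paper cites a classification for this, but you can check it directly. Sylow counting shows the Sylow $11$-subgroup is normal, so $G\cong C_{11}\rtimes Q$ with $|Q|=12$, and $Q$ acts through a quotient of order at most $2$. A trivial action gives $G'=Q'$, which is a $p$-group. A nontrivial action gives $G'=C_{11}Q'$ and $G/G'\cong Q/Q'$. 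Requiring $Q'\neq 1$, $Q/Q'$ cyclic, and $Q$ having an index-$2$ subgroup leaves only $Q=\mathrm{Dic}_3$, i.e.\ $G\cong\mathrm{Dic}_{33}$.
\end{itemize}
Your Step 2 then gives $\ord(K)\le |D_{33}|=66$ for every knot $K$ with a TAV group of order $132$. Hence $132\notin\mathrm{Im}\,\ord|_{\mathcal N}$, while $132$ is the order of a TAV group.
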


This paper is organized as follows. 
In Section \ref{sec:2}, we recall the definition of twisted Alexander polynomial, and provide several examples of both TAV groups and non-TAV groups. 
In Section \ref{sec:modp}, we show Theorem \ref{thm:main-1}, Corollaries \ref{cor:dihedral} and \ref{cor:onlyif}. Moreover, we give alternative proof of \cite[Theorem 1.2(i)]{IMS23-1} concerning the minimum value of the TAV order function $\ord\colon\K\to\N\cup\{+\infty\}$. 
In Section \ref{sec:extension}, we prove Theorem \ref{thm:main-2}, and introduce the notion of seed for finite groups of weight one. 
In Section \ref{sec:remark}, we show Corollary \ref{cor:image}, namely, the existence of an order of TAV groups which cannot realize the TAV order of any non-fibered knot. 

A part of this paper is contained in our unpublished manuscripts \cite{IMS-II}, \cite{MS23-1}. In this paper, we develop it from the viewpoint of the formulation of twisted Alexander polynomials of a knot for group extensions. 
 
\section{Preliminaries}\label{sec:2}

\subsection{Twisted Alexander polynomials}\label{subsec:2.1}

In this paper, we consider an oriented knot $K$ in the $3$-sphere $S^3$ and let $E_K=S^3\setminus \nu(K)$, where $\nu(K)$ denotes an open tubular neighborhood of $K$. 
We denote $\pi_1(E_K)$ by $G(K)$, 
and call it the \textit{knot group} of $K$. 
We fix a Wirtinger presentation $G(K)=\la s_1,\ldots,s_n\,|\,r_1,\ldots,r_{n-1}\ra$ of the knot group, and let $\phi\colon G(K)\to H_1(E_K;\Z)=\Z=\la t\ra$ be the abelianization, that is, $\phi$ maps an oriented meridian $\mu_K$ of $K$ to the generator $t$. 

Let $R$ be a Noetherian unique factorization domain with quotient field $Q$. 
For a given representation $\varrho\colon G(K)\to \mathrm{GL}(k,R)$, we extend the group homomorphism $\phi\otimes\varrho\colon G(K)\to \mathrm{GL}(k,R[t^{\pm1}])$, 
which is defined by $(\phi\otimes\varrho)(s)=\phi(s)\varrho(s)$ for $s\in G(K)$, 
to a ring homomorphism $\Z[G(K)]\to \mathrm{Mat}(k,R[t^{\pm1}])$, 
where the target is the matrix algebra over $R[t^{\pm1}]$. 
Let $F_n$ denote the free group $\la s_1,\ldots,s_n\ra$ 
and $\Phi\colon \Z[F_n]\to \mathrm{Mat}(k,R[t^{\pm1}])$ the composition of the surjection $\Z[F_n]\to\Z[G(K)]$ induced by the presentation of $G(K)$ and the ring homomorphism $\Z[G(K)]\to \mathrm{Mat}(k,R[t^{\pm1}])$. 

Let $M=(m_{ij})$ be the $k(n-1)\times kn$ matrix with 
$m_{ij}=\Phi(\frac{\partial r_i}{\partial s_j})$, 
where $\frac{\partial}{\partial s_j}$ is the free differential by $s_j$. 
For $1\leq j\leq n$ we denote by $M_j$ the $k(n-1)\times k(n-1)$ matrix obtained from $M$ by removing the $j$-th block column. 
Then Wada \cite{Wada94-1} defined the twisted Alexander polynomial $\D_K^\varrho(t)$ associated with $\varrho\colon G(K)\to\mathrm{GL}(k,R)$ to be the rational expression 
$$
\D_K^\varrho(t)
=
\frac{\det M_j}{\det\Phi(s_j-1)}\in Q(t),
$$
which is well defined up to multiplication by $\varepsilon t^{l}~(\varepsilon\in R^\times,\,l\in\Z)$. The following lemma is elementary (see \cite[Lemma 5]{BF14-1}). 

\begin{lemma}\label{lem:BF}
Let $K$ be an oriented knot with the Alexander polynomial $\D_K(t)$. 
\begin{itemize}
\item[(i)]
If $\tau\colon G(K)\to \mathrm{GL}(1,\C)$ is a representation which is given by sending $\mu_K$ to $\xi\in\C\setminus\{0\}$, then $\D_K^\tau(t)=\D_K(\xi t)/(\xi t-1)$ holds. 

\item[(ii)]
If $\varrho\colon G(K)\to \mathrm{GL}(k,R)$ is a representation of the form $\varrho(s)=
\begin{pmatrix}
\beta(s)&\gamma(s)\\
0&\delta(s)
\end{pmatrix}$, then $\D_K^\varrho(t)=\D_K^\beta(t)\cdot\D_K^\delta(t)$ holds. 
\end{itemize}
\end{lemma}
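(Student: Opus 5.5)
We prove the two parts of Lemma~\ref{lem:BF} separately, working directly from Wada's definition with a fixed Wirtinger presentation $G(K)=\la s_1,\ldots,s_n\,|\,r_1,\ldots,r_{n-1}\ra$ and the Fox matrix $M=(\Phi(\partial r_i/\partial s_j))$.

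For (i), the plan is to reduce to the untwisted Alexander matrix by a change of variable. In a Wirtinger presentation every generator $s_j$ is a meridian, so $\phi(s_j)=t$ and $\tau(s_j)=\xi$ for all $j$. Hence $(\phi\otimes\tau)(s_j)=\xi t$, and $\Phi$ is obtained from the abelianization map $\Z[F_n]\to\Z[t^{\pm1}]$ by substituting $t\mapsto \xi t$. The matrix $M_j$ is then the classical Alexander matrix (with the $j$-th column removed) evaluated at $\xi t$. By the classical theorem (Fox), the determinant of this matrix is $\D_K(t)$ up to a unit $\pm t^l$; after substitution the unit becomes $\pm(\xi t)^l$, which is a unit in $\C[t^{\pm1}]$. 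The denominator is $\Phi(s_j-1)=\xi t-1$. This gives $\D_K^\tau(t)=\D_K(\xi t)/(\xi t-1)$ up to the allowed indeterminacy.

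For (ii), write $\varrho=\begin{pmatrix}\beta&\gamma\\0&\delta\end{pmatrix}$ with $\beta$ of size $a$ and $\delta$ of size $b$. The key observation is that the block upper triangular matrices form a subalgebra of $\mathrm{Mat}(k,R[t^{\pm1}])$. Therefore, for every $x\in\Z[F_n]$, $\Phi(x)$ is block upper triangular, with diagonal blocks $\Phi_\beta(x)$ and $\Phi_\delta(x)$, where $\Phi_\beta,\Phi_\delta$ are the corresponding ring homomorphisms for $\beta$ and $\delta$. In particular each $k\times k$ block of $M_j$, and also $\Phi(s_j-1)$, has this form.

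Next, permute the rows and columns of $M_j$ so that all the ``$\beta$-coordinates'' of the blocks come first and all the ``$\delta$-coordinates'' come last. The same permutation is applied to rows and columns, so the determinant is unchanged up to sign. After this permutation $M_j$ becomes block upper triangular $\begin{pmatrix}M_j^\beta&*\\0&M_j^\delta\end{pmatrix}$, where $M_j^\beta$ and $M_j^\delta$ are exactly the Wada matrices for $\beta$ and $\delta$. Hence
\[
\det M_j=\pm\det M_j^\beta\det M_j^\delta .
\]
Similarly, $\det\Phi(s_j-1)=\det\Phi_\beta(s_j-1)\det\Phi_\delta(s_j-1)$. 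Dividing these gives the product formula $\D_K^\varrho(t)=\D_K^\beta(t)\cdot\D_K^\delta(t)$.

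The only real point of care is the bookkeeping of the reindexing permutation, which must be applied simultaneously to rows and columns; the rest is formal. The indeterminacy $\varepsilon t^l$ absorbs the resulting sign.
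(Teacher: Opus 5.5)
Your proof is correct and complete. The paper gives no argument of its own here: it calls the lemma elementary and refers to Boden--Friedl's Lemma~5. What you have written is therefore a self-contained verification of exactly what the paper takes for granted, not a competing route.

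In (i) the essential points are all present.
\begin{itemize}
\item $\tau$ factors through $H_1(E_K;\Z)$ because $\mathrm{GL}(1,\C)$ is abelian, so $\Phi$ is the abelianization followed by the ring map $t\mapsto \xi t$, and that map commutes with determinants.
\item Every maximal minor of the Wirtinger Alexander matrix is $\pm t^l\Delta_K(t)$, since its columns sum to zero and its first elementary ideal is $(\Delta_K(t))$.
\item The factor $\pm\xi^l$ is absorbed into the indeterminacy.
\end{itemize}

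In (ii) the two facts that matter are both there: $\Phi$ takes values in block upper triangular matrices, and passing to diagonal blocks is a ring homomorphism on that subalgebra. One small sharpening: you apply the same reindexing to rows and columns, so $M_j$ is replaced by $PM_jP^{T}$ for a single permutation matrix $P$. Hence $\det M_j$ is unchanged exactly, not just up to sign.

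The natural alternative is homological. One views $\Delta_K^{\varrho}$ as a Reidemeister torsion and obtains (ii) from multiplicativity of torsion in the short exact sequence of coefficient modules with $\beta$ and $\delta$ as sub- and quotient representations. That route is presentation-free and adapts directly to the finite CW complex generalizations mentioned in the paper's remarks. Your matrix computation is more elementary but is tied to a deficiency-one presentation.
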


Let $\rho\colon G \to \mathrm{Aut}_\Z(\Z[G])$ be the regular representation of a finite group $G$, which is a permutation representation that corresponds to the action of $G$ on itself through right multiplication. We can also identify $\mathrm{Aut}_\Z(\Z[G])$ with $\mathrm{GL}(|G|,\Z)$. Thus, for an epimorphism $f \colon G(K) \to G$, we can consider the twisted Alexander polynomial $\Delta_K^{\rho \circ f}(t)$ associated with the representation $\rho\circ f\colon G(K)\to \mathrm{GL}(|G|,\Z)$. 

\subsection{Twisted Alexander vanishing groups}\label{subsec:2.2}

For reader's convenience, we exhibit here some examples of both TAV groups and non-TAV groups. 

First of all, a finite \textit{abelian group} $G$ of weight one is not a TAV group, because it
is a cyclic group $C_m$. Using Lemma \ref{lem:BF}(i), for an epimorphism $f\colon G(K)\to C_m$, we obtain 
\begin{equation}\label{eq:cyclic}
\D_K^{\rho\circ f}(t)=\prod_{j=0}^{m-1}\frac{\D_K(\alpha^jt)}{\alpha^jt-1},
\end{equation}
where $\alpha\in \C$ is a primitive $m$-th root of unity (see \cite[Example 2.6]{IMS23-1}). Because the image $\rho(\mu_K)$ of an oriented meridian $\mu_K$ is conjugate to the diagonal matrix $\mathrm{diag}(1,\alpha,\ldots,\alpha^{m-1})$. Hence, $\D_K^{\rho\circ f}(t)$ is nonzero for any knot $K$, because so is $\D_K(t)$. 

Next, every \textit{$p$-group} is not a TAV group. Since a finite $p$-group $G$ is nilpotent, $G$ is cyclic when $w(G)=1$. Thus, a $p$-group $G$ is not a TAV group. 

On the other hand, the following three kinds of groups are TAV groups: 

\begin{itemize}
\item
The \textit{alternating group} $A_n~(n\geq5)$. 

\item
The \textit{symmetric group} $S_n=A_n\rtimes C_2~(n\geq4)$.

\item
The \textit{dihedral group} $D_n=C_n\rtimes C_2$ of order $2n$, where $n$ is odd and not a power of an odd prime number. 
\end{itemize}

For $n\geq2$, the \textit{dicyclic group} $\mathrm{Dic}_n$ of order $4n$ is defined by the presentation $\mathrm{Dic}_n=
\la
a,b\,|\,a^{2n},b^2a^{-n},bab^{-1}a
\ra$, 
and is often called the \textit{binary dihedral group}, because $\mathrm{Dic}_n/\la y^2\ra$ is isomorphic to $D_n$. If $n$ is even, $\mathrm{Dic}_n$ is not of weight one, and hence not a TAV group. Then we have: 

\begin{itemize}
\item
The \textit{dicyclic group} $\mathrm{Dic}_n$ of order $4n$, where 
$n$ is odd and not a power of an odd prime number, is a TAV group. 
\end{itemize}

\section{A mod $p$ formula for group extensions}\label{sec:modp}

The following formula is our first main theorem of this paper, which includes Theorem \ref{thm:main-1} in the introduction. Let  $G$ be a finite group of weight one, and $\rho\colon G\to\mathrm{GL}(|G|,\Z)$ its regular representation.  

\begin{theorem}\label{modp-thm}
Let $H \subset G$ be a subgroup of order $p^n$ for a prime number $p$. Then for an epimorphism $f\colon G(K) \to G$, 
we have
$$\Delta_K^{\rho\circ f}(t) \equiv \left(\Delta_K^{\overline{\rho} \circ f}(t)\right)^{p^n} ~(\mathrm{mod}~ p),$$
where $\overline{\rho} \colon G \to {\rm GL}(|H \backslash G|, \mathbb{Z})$ is the 
permutation representation associated to the right action of $G$ on the cosets $Hg~(g\in G)$. In particular, if $H$ is a normal subgroup of $G$, we have
\begin{equation*}\label{eq:thm}
\Delta_K^{\rho\circ f}(t) \equiv \left( \Delta_K^{\tilde{\rho}\circ f}(t) \right)^{p^n} ~(\mathrm{mod}~ p),
\end{equation*}
where 
$\tilde{\rho}=\rho_{H\backslash G}\circ\pi$ is the composition of 
the quotient map $\pi \colon G \to H\backslash G$ and the regular representation $\rho_{H\backslash G} \colon H\backslash G \to {\rm GL}(|H \backslash G|, \mathbb{Z})$ of the quotient group $H\backslash G$. 
\end{theorem}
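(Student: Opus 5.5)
Work over $\F_p$ throughout. Reduction mod $p$ commutes with forming Wada's matrices and their determinants. So it suffices to show that the $\F_p[G]$-module $\F_p[G]$, with $G$ acting by right multiplication, admits a filtration by $G$-submodules whose successive quotients are all isomorphic to the permutation module $\F_p[H\backslash G]$, with exactly $p^n$ such quotients. Choosing a basis adapted to the filtration makes $\rho\otimes\F_p$ block upper triangular with $p^n$ diagonal blocks equal to $\ov{\rho}\otimes\F_p$. Lemma \ref{lem:BF}(ii), applied over $R=\F_p$ (a Noetherian UFD), then gives $\D_K^{\rho\circ f}(t)\equiv(\D_K^{\ov\rho\circ f}(t))^{p^n}$ up to units. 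I will note that the denominator $\det\Phi(s_j-1)$ is nonzero mod $p$: it is $\pm$ a product of cyclotomic-type factors $t^{m}-1$ coming from the permutation matrix, so the rational expressions reduce sensibly. The normal case is immediate, because when $H$ is normal, $\ov\rho=\rho_{H\backslash G}\circ\pi=\tilde\rho$.

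\textbf{Constructing the filtration.} Write $\F_p[G]=\F_p[H]\otimes_{\F_p[H]}\F_p[G]$, that is, $\F_p[G]\cong \mathrm{Ind}_H^G(\F_p[H])$ as right modules. Since $H$ is a $p$-group, the augmentation ideal $I$ of $\F_p[H]$ is nilpotent. Hence the regular module $\F_p[H]$ has a composition series $\F_p[H]=J_0\supset J_1\supset\cdots\supset J_{p^n}=0$ of right $H$-submodules, for instance refining the powers of $I$. Each of the $p^n$ successive quotients is one-dimensional with trivial $H$-action, since the trivial module is the only simple $\F_p[H]$-module. Induction $-\otimes_{\F_p[H]}\F_p[G]$ is exact because $\F_p[G]$ is free over $\F_p[H]$. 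Concretely, the submodules are $J_i\F_p[G]$, and each quotient is $\mathrm{Ind}_H^G(\F_p)\cong\F_p[H\backslash G]$, the permutation module on right cosets. This is exactly $\ov\rho\otimes\F_p$.

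\textbf{Turning the filtration into a block form.} With a basis compatible with the flag of $G$-submodules, every $\rho(g)$ becomes simultaneously block upper triangular in $\mathrm{GL}(|G|,\F_p)$. The change of basis is a single fixed matrix, which only alters $\D$ by a unit. Iterating Lemma \ref{lem:BF}(ii) $p^n-1$ times finishes the argument.

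\textbf{Main obstacle.} The expected difficulty is justifying that Lemma \ref{lem:BF}(ii) and the reduction mod $p$ are compatible with the rational-function definition. Concretely, reducing $\det M_j/\det\Phi(s_j-1)$ over $\Z$ modulo $p$ must agree with computing the same quotient over $\F_p$. This holds because both determinants are integer polynomials, and the denominator for a permutation representation is a product of factors $t^{m}-1$, which is nonzero mod $p$. The congruence should therefore be read as equality in $\F_p(t)$ up to units $\pm t^l$.
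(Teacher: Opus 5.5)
Your proposal is correct and follows the paper's route: filter $\F_p[H]$ by submodules with trivial successive quotients, induce up to $G$ to get a block-triangular form with $p^n$ diagonal blocks equal to the permutation representation on $H\backslash G$, and apply Lemma \ref{lem:BF}(ii). The differences are minor. You obtain the filtration of $\F_p[H]$ from the standard nilpotency of the augmentation ideal, whereas the paper builds it by hand (Lemma \ref{modp-lem} for $C_p^n$, then induction on $|H|$ in Lemma \ref{pgp-lem}). You also check explicitly that the denominators $\det\Phi(s_j-1)$ are products of factors $\pm(t^m-1)$ and so survive reduction mod $p$, a point the paper leaves implicit.
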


\begin{remark}
\begin{itemize}
\item[(i)]
Note that $\big(\Delta_K^{\overline{\rho} \circ f}(t)\big)^{p^n}\equiv\,\Delta_K^{\overline{\rho} \circ f}(t^{p^n})~(\mathrm{mod}~p)$ holds.
\item[(ii)]
The theorem also holds for a finite CW complex $X$ and $\phi\in H^1(X;\Z)=\mathrm{Hom}(\pi_1(X),\Z)$ an epimorphism, which is non-necessarily abelianization. But we omit the detail here. 
\end{itemize}
\end{remark}

Before proving Theorem \ref{modp-thm}, let us see an example. 

\begin{example}
Let $G_1=\mathrm{CSU}_2(\F_3)=Q_8.S_3=C_2._{\overset{\phantom{ }}{2}}S_4$ and $G_2=\mathrm{GL}_2(\F_3)=Q_8\rtimes S_3=C_2._{\overset{\phantom{ }}{2}}S_4$ be groups of weight one of order $48$ (see \cite{GN} for the precise definition of these groups). Since both the quaternion group $Q_8$ and the cyclic group $C_2$ are normal subgroups of $G_1$ and $G_2$, we can apply Theorem \ref{modp-thm} to these groups. Let $f_i\colon G(K) \to G_i~(i=1,2)$ be epimorphisms and $\rho_i\colon G_i\to \mathrm{GL}(48,\Z)$ the regular representations of $G_i$. Then we have
$$
\Delta_K^{\rho_i\circ f_i}(t) 
\equiv 
\left( \Delta_K^{\tilde{\rho}_{S_3}\circ f_i}(t) \right)^{2^3} 
\equiv
\left( \Delta_K^{\tilde{\rho}_{S_4}\circ f_i}(t) \right)^{2} ~(\mathrm{mod}~ 2).
$$
These two descriptions of $\Delta_K^{\rho_i\circ f_i}(t)$ come from the group extension $S_4=C_2^2\rtimes S_3$. In fact, applying Theorem \ref{modp-thm} to this extension, we have  
$$
\Delta_K^{\rho_{S_4}\circ f}(t)
\equiv
\left( \Delta_K^{\tilde{\rho}_{S_3}\circ f}(t) \right)^{2^2} ~(\mathrm{mod}~ 2)
$$
for an epimorphism $f\colon G(K)\to S_4$, the regular representation $\rho_{S_4}\colon S_4\to\mathrm{GL}(24,\Z)$, and $\tilde{\rho}_{S_3}=\rho_{S_3}\circ\pi\colon S_4\to \mathrm{GL}(6,\Z)$. 
\end{example}

We need the following two lemmas to show Theorem \ref{modp-thm}. These results may be well known to experts, but in order to make this paper as self-contained as possible, we will provide their proofs here (see \cite[Sections I, III]{Alperin86-1}). 

Let $\F_p$ denote the prime field of characteristic $p>0$. 

\begin{lemma}\label{modp-lem}
Let $V \cong \mathbb{F}_p[C_p^n]$ be the regular representation of $C_p^n$ with coefficient $\mathbb{F}_p$. Then there exists a sequence
$$V = V_0 \supset V_1 \supset \cdots \supset V_{n(p-1) + 1} = 0$$
of subrepresentations such that $V_j / V_{j+1}$ is a trivial representation for each $j$.
\end{lemma}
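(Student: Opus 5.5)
We identify the group algebra with a truncated polynomial ring and filter it by powers of its augmentation ideal. Write $C_p^n=\la g_1,\ldots,g_n\ra$ with $g_i$ of order $p$. Since we are in characteristic $p$, we have $g_i^p-1=(g_i-1)^p$. Putting $x_i=g_i-1$, we get a ring isomorphism
$$\F_p[C_p^n]\cong \F_p[x_1,\ldots,x_n]/(x_1^p,\ldots,x_n^p).$$
This quotient ring has $\F_p$-basis the monomials $x_1^{a_1}\cdots x_n^{a_n}$ with $0\le a_i\le p-1$. The total degree $a_1+\cdots+a_n$ of these monomials ranges from $0$ to $n(p-1)$.

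For $0\le j\le n(p-1)+1$, define $V_j\subset V$ to be the $\F_p$-span of the monomials of total degree at least $j$. Equivalently, $V_j$ is the image of $I^j$, where $I=(x_1,\ldots,x_n)$ is the augmentation ideal. Then $V_0=V$, the chain $V_0\supset V_1\supset\cdots$ is decreasing, and $V_{n(p-1)+1}=0$ because no monomial in the basis has degree exceeding $n(p-1)$.

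Each $V_j$ is an ideal of the commutative ring $V$, so it is stable under multiplication by every group element $g$. It is therefore a subrepresentation. This holds whether the action is by right or left multiplication, since the group is abelian.

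For triviality of the quotients, note that $g_i$ acts as $1+x_i$. For $v\in V_j$ we have $g_iv-v=x_iv\in V_{j+1}$. Hence each generator $g_i$ acts as the identity on $V_j/V_{j+1}$, and so the whole group $C_p^n$ acts trivially on $V_j/V_{j+1}$.

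There is no real obstacle in this argument. The only points needing care are the characteristic-$p$ identity $(g-1)^p=g^p-1$, which comes from the binomial coefficients $\binom{p}{k}$ with $0<k<p$ vanishing mod $p$, and the check that the monomial description gives a basis. The basis claim follows from a dimension count: both sides have dimension $p^n$.
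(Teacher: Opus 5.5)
Your proof is correct and is essentially the paper's argument. The paper likewise substitutes $y_i = x_i - 1$ to identify $\mathbb{F}_p[C_p^n]$ with $\mathbb{F}_p[y_1,\dots,y_n]/(y_1^p,\dots,y_n^p)$, filters by total degree of monomials, and uses that $x_i$ acts as multiplication by $1+y_i$, so that $(x_i-1)V_j\subset V_{j+1}$. Your extra remarks, namely the augmentation-ideal description of $V_j$ and the dimension count showing the monomials form a basis, are correct details that the paper leaves implicit.
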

\begin{proof}
Taking generators $x_1, \dots, x_n \in C_p^n$, we identify $\mathbb{F}_p[C_p^n]$ with 
$$
\mathbb{F}_p[x_1, \dots, x_n]/(x_1^p-1, \dots, x_n^p - 1) = \mathbb{F}_p[x_1, \dots, x_n]/((x_1-1)^p, \dots, (x_n-1)^p). 
$$ 
Replacing $x_i -1$ with $y_i$, we find $V$ is isomorphic to $\mathbb{F}_p[y_1,\dots,y_n]/(y_1^p, \dots, y_n^p)$. Define $V_j \subset \mathbb{F}_p[y_1,\dots,y_n]/(y_1^p, \dots, y_n^p)$ to be the subspace generated by monomials with total degree greater than or equal to $j$:
$$V_j = {\rm span}\{ y_1^{a_1} \cdots y_n^{a_n} \mid 0 \leq a_i < p,\; a_1 + \cdots + a_n \geq j\}.$$
Because the action of $x_i$ on $\mathbb{F}_p[y_1,\dots,y_n]/(y_1^p, \dots, y_n^p)$ is the multiplication by $1 + y_i$, $V_j$ is a subrepresentation and $(x_i - 1) V_j \subset V_{j+1}$, i.e., the induced action on $V_j / V_{j+1}$ is trivial.
\end{proof}

\begin{lemma}\label{pgp-lem}
Let $H$ be a finite $p$-group and $V \cong \mathbb{F}_p[H]$ the regular representation of $H$ with coefficient $\mathbb{F}_p$. Then there exists a sequence
$$V = V_0 \supset V_1 \supset \cdots \supset V_N = 0$$
of subrepresentations such that $V_j / V_{j+1}$ is a trivial representation for each $j$.
\end{lemma}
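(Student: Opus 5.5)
The plan is to use the augmentation ideal. Let $I \subset \F_p[H]$ be the augmentation ideal, spanned by the elements $h-1$ for $h\in H$. Set $V_j = I^j$, the $j$-th power of $I$ inside $V=\F_p[H]$.

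Each $I^j$ is a two-sided ideal, so in particular it is a subrepresentation for the right regular action. For any $h\in H$ and $v\in I^j$ we have $v\cdot h - v = v(h-1)\in I^{j+1}$. Hence $H$ acts trivially on $V_j/V_{j+1}$. This gives the filtration $V=I^0\supset I^1\supset I^2\supset\cdots$, and it remains only to show that $I^N=0$ for some $N$.

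The main obstacle is this nilpotency of $I$. I would prove it by induction on $|H|$, imitating the computation in Lemma \ref{modp-lem}.

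If $H$ is trivial, then $I=0$. Otherwise, since $H$ is a nontrivial $p$-group, its center is nontrivial, so we can choose a central element $z\in H$ of order $p$. The element $y=z-1$ is then central in $\F_p[H]$ and satisfies $y^p=z^p-1=0$ in characteristic $p$.

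Let $\bar H = H/\langle z\rangle$, and let $\bar I$ be the augmentation ideal of $\F_p[\bar H]$. The natural surjection $\F_p[H]\to\F_p[\bar H]$ maps $I$ onto $\bar I$, and its kernel is the two-sided ideal $y\,\F_p[H]$. By the induction hypothesis $\bar I^{M}=0$ for some $M$, so $I^{M}\subset y\F_p[H]$. Because $y$ is central, we get
$$I^{Mp}\subset (y\F_p[H])^p = y^p\F_p[H]=0.$$
This completes the induction.

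Alternatively, the whole induction could be phrased directly in terms of the filtration: lift the filtration of $\F_p[\bar H]$ and refine it by the powers $y^k\F_p[H]$, $k=0,\dots,p$. Note that $y^k\F_p[H]/y^{k+1}\F_p[H]$ is a quotient of $\F_p[\bar H]$ on which $z$ acts trivially. Either route yields the required sequence of subrepresentations with trivial successive quotients, with $N$ finite.
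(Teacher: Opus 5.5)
Your proof is correct, but it takes a different route from the paper's.

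The paper inducts on $|H|$ by passing to a \emph{subgroup}. It constructs a normal subgroup $H_0$ with $H/H_0\cong C_p^n$ out of the abelianization, using $H'\neq H$. It then takes the filtration of $\F_p[H_0]$ given by the induction hypothesis and induces it up to $H$ via $-\otimes_{\F_p[H_0]}\F_p[H]$. Each induced quotient is, as a representation of $H/H_0$, a sum of copies of its regular representation, and the paper refines these using Lemma~\ref{modp-lem}.

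You instead fix one canonical filtration, the powers $I^j$ of the augmentation ideal. For this filtration, triviality of the quotients is a one-line computation, so the only real content is the nilpotency of $I$. You prove that by induction through the \emph{quotient} $H/\langle z\rangle$ by a central element of order $p$, using $Z(H)\neq 1$. The step $I^{Mp}\subset y^p\F_p[H]=0$ is valid precisely because $y$ is central.

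Your argument is more self-contained. It needs neither Lemma~\ref{modp-lem} nor the freeness of $\F_p[H]$ over $\F_p[H_0]$, which the paper tacitly uses to ensure the induced chain is again a chain of subrepresentations with the expected quotients. It also gives a little more:
\begin{itemize}
\item a filtration by two-sided ideals that is intrinsic to $H$ (preserved by all automorphisms of $H$);
\item by tracking exponents in your induction, the explicit bound $I^{|H|}=0$.
\end{itemize}
The paper's version, in exchange, is assembled from exactly the ingredients that are reused immediately afterwards in the proof of Theorem~\ref{modp-thm}: an explicit elementary abelian computation and induction of representations from a subgroup.
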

\begin{proof}
We show the lemma by induction on the order of $H$; it is trivial if $|H| = 1$.
\par Let $|H|$ be a finite $p$-group and assume that the lemma holds for any $p$-group with order less than $|H|$. Let $p^a$ be the maximum of the orders of the elements of the abelianization $H/H'$. Since $H$ is nilpotent, $H/H'$ is nontrivial and hence $a > 0$. We define a subgroup $H_0$ of $H$ by
$$H_0 = \{h \in H \mid h^{p^{a-1}} \in H'\}.$$
We find that $H_0$ is normal and $H/H_0$ is isomorphic to $C_p^n$ for some $n > 0$. Since $|H_0| < |H|$, we can apply the assumption to $H_0$ and there exists a sequence
$$\mathbb{F}_p[H_0] = W_0 \supset W_1 \supset \cdots \supset W_{N_0} = 0$$
of subrepresentations of $H_0$ such that $W_j/W_{j+1}$ is trivial for any $j$. Recall that the regular representation of $H$ is induced from that of $H_0$, i.e., $\mathbb{F}_p[H] \cong  \mathbb{F}_p[H_0] \otimes_{\mathbb{F}_p[H_0]} \mathbb{F}_p[H]$. Thus, we obtain a sequence
\begin{equation}\label{eq1}
\mathbb{F}_p[H] \cong W_0 \otimes_{\mathbb{F}_p[H_0]} \mathbb{F}_p[H] \supset \cdots \supset W_{N_0} \otimes_{\mathbb{F}_p[H_0]} \mathbb{F}_p[H] = 0
\end{equation}
of subrepresentations of $H$. Since $H_0$ is normal, the action of $H_0$ on $(W_j \otimes_{\mathbb{F}_p[H_0]} \mathbb{F}_p[H])/(W_{j+1} \otimes_{\mathbb{F}_p[H_0]} \mathbb{F}_p[H]) \cong (W_j/W_{j+1}) \otimes_{\mathbb{F}_p[H_0]} \mathbb{F}_p[H]$ is trivial and we can regard $(W_j \otimes_{\mathbb{F}_p[H_0]} \mathbb{F}_p[H])/(W_{j+1} \otimes_{\mathbb{F}_p[H_0]} \mathbb{F}_p[H])$ as a representation of $H/H_0$; we can apply Lemma \ref{modp-lem} to this quotient representation. Therefore we can take a subdivision of the sequence (\ref{eq1}) to obtain a required one.
\end{proof}

\begin{proof}[Proof of Theorem \ref{modp-thm}]
We take a sequence of subrepresentations $\mathbb{F}_p[H] = V_0 \supset V_1 \supset \cdots \supset V_N = 0$ of $H$ as in Lemma \ref{pgp-lem} and let $W_j$ be the representation of $G$ induced from $V_j$, i.e., $W_j = V_j \otimes_{\mathbb{F}_p[H]} \mathbb{F}_p[G]$; since the regular representation of $G$ is induced from the regular representation of $H \subset G$, we obtain a sequence $\mathbb{F}_p[G] = W_0 \supset W_1 \supset \cdots \supset W_N = 0$ of representations of $G$. Furthermore, since $V_j / V_{j+1}$ is trivial, $W_j / W_{j+1} = (V_j / V_{j+1}) \otimes_{\mathbb{F}_p[H]} \mathbb{F}_p[G]$ is isomorphic to the direct sum of copies of the 
permutation representation of $G$ on $H \backslash G$. Thus, we can represent the $\mathbb{F}_p$-coefficient regular representation of $G$ in block triangular matrices so that each of the $p^n$ diagonal blocks represents the 
permutation representation on $H \backslash G$. By Lemma \ref{lem:BF}(ii), we obtain the required formula. 
\end{proof}

Here, let us prove Corollary \ref{cor:dihedral} in the introduction. 

\begin{corollary}[{Corollary \ref{cor:dihedral}}]\label{cor:dihedral2}
For a given epimorphism $f\colon G(K) \to D_{p^n}$, we have
$$
\D_{K}^{\rho\circ f}(t)
\equiv
\left(
\frac{\D_K(t)}{t-1}\cdot
\frac{\D_K(-t)}{t+1}
\right)^{p^n}
~(\mathrm{mod}~ p).
$$
\end{corollary}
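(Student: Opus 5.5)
We apply Theorem \ref{modp-thm} with $H=C_{p^n}\subset D_{p^n}=C_{p^n}\rtimes C_2$. This is a normal subgroup of order $p^n$, and $H\backslash D_{p^n}\cong C_2$. The theorem therefore gives
$$\D_K^{\rho\circ f}(t)\equiv\left(\D_K^{\tilde\rho\circ f}(t)\right)^{p^n}~(\mathrm{mod}~p),$$
where $\tilde\rho=\rho_{C_2}\circ\pi$ and $\pi\colon D_{p^n}\to C_2$ is the quotient map. It remains to compute $\D_K^{\tilde\rho\circ f}(t)$ over $\Z$, and then reduce modulo $p$.

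First we check that the composite $\pi\circ f\colon G(K)\to C_2$ is an epimorphism. Since $f$ is onto, $\pi\circ f$ is onto as well. Every Wirtinger generator is conjugate to the meridian $\mu_K$, so $\pi\circ f$ factors through the abelianization and sends $\mu_K$ to the generator of $C_2$. Hence $\tilde\rho\circ f=\rho_{C_2}\circ(\pi\circ f)$ is the regular representation of $C_2$ composed with an epimorphism $G(K)\to C_2$.

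Next we apply formula (\ref{eq:cyclic}) with $m=2$ and $\alpha=-1$. The point is that over $\Q$ (indeed over $\Z$) the regular representation of $C_2$ is conjugate to $\mathrm{diag}(1,-1)$, and Lemma \ref{lem:BF} then yields
$$\D_K^{\tilde\rho\circ f}(t)=\frac{\D_K(t)}{t-1}\cdot\frac{\D_K(-t)}{-t-1}.$$
The sign is absorbed by the ambiguity $\varepsilon t^l$, which gives the stated expression. Raising this to the power $p^n$ gives the corollary.

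The only delicate point is interpreting the congruence mod $p$ when rational functions are involved, and where the diagonalization happens. The change of basis $\begin{pmatrix}1&1\\1&-1\end{pmatrix}$ has determinant $-2$, a unit in $\F_p$ for odd $p$. Alternatively, note that $\D_K(t)\D_K(-t)$ is divisible by nothing problematic, since the denominator $(t-1)(t+1)=t^2-1$ is exactly $\det\Phi(s_j-1)$ for the $C_2$ regular representation. So the identity holds already in $\Z[t^{\pm1}]$-fractions with denominator $t^2-1$, and reduces cleanly modulo $p$. Comparing this with the mod $p$ congruence of Theorem \ref{modp-thm}, which is understood coefficientwise up to units, finishes the proof.
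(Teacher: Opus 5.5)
Your argument is correct and is essentially the paper's proof: apply Theorem~\ref{modp-thm} to the normal subgroup $H=C_{p^n}$ with quotient $C_2$, then evaluate the $C_2$ factor via (\ref{eq:cyclic}) with $m=2$, $\alpha=-1$. One small slip is the parenthetical ``indeed over $\Z$'': the permutation matrix of the regular representation of $C_2$ is not conjugate to $\mathrm{diag}(1,-1)$ in $\mathrm{GL}(2,\Z)$, since its $\pm1$-eigenlattices span only an index-$2$ sublattice. This is harmless, because the twisted Alexander polynomial depends only on the conjugacy class over $\mathbb{Q}$ and the denominators $\pm(t^2-1)$ reduce cleanly modulo $p$.
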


\begin{proof}
Since the commutator subgroup of $D_{p^n}=C_{p^n}\rtimes C_2$ is $C_{p^n}$, we can apply Theorem \ref{modp-thm} to $H=C_{p^n}$. Using (\ref{eq:cyclic}) for the case $m=2$ (i.e. $\alpha=-1$), we obtain the desired formula.  
\end{proof}

Applying Theorem \ref{modp-thm} to the commutator subgroup $G'$ of a finite group $G$ of weight one, we obtain Corollary \ref{cor:onlyif}. Namely, the reduction modulo $p$ of the twisted Alexander polynomial is expressed in terms of the Alexander polynomial. 

In order to prove Corollary \ref{cor:onlyif}, let us first consider the cyclic group $C_m$ where $m=p^kl$, $p$ is a prime number, and $p \nmid l$. Let $\F_{p^d}$ be the finite field where $d$ is the smallest positive integer such that $p^d\equiv1~(\mathrm{mod}~l)$; i.e., $d=\mathrm{ord}_l(p)$. For a polynomial $P(t)\in\Z[t^{\pm1}]$, we denote by $\ov{P}(t)$ the polynomial obtained from $P(t)$ by reducing each coefficient modulo $p$. Note that we can regard $\ov{P}(t)$ as an element of $\F_{p^d}[t^{\pm1}]$ by identifying $\F_p$ with its image in $\F_{p^d}$. 

\begin{proposition}\label{pro:cyclic}
For an epimorphism $f\colon G(K)\to C_m$ and the regular representation $\rho\colon C_m\to\mathrm{GL}(m,\Z)$, we have
$$
\ov{\D_K^{\rho\circ f}}(t)
=
\left(\prod_{j=0}^{l-1}\frac{\ov{\D_K}(\zeta^jt)}{\zeta^jt-1}\right)^{p^{k}}
$$
in $\F_p(t)$, where $\zeta$ is a primitive $l$-th root of unity in $\F_{p^d}$. In particular, $\ov{\D_K^{\rho\circ f}}(t)$ is nonzero. 
\end{proposition}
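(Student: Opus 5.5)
Apply Theorem \ref{modp-thm} to $G=C_m=C_{p^k}\times C_l$ with $H=C_{p^k}$, the unique subgroup of order $p^k$. It is normal and $H\backslash G\cong C_l$, so
$$\ov{\D_K^{\rho\circ f}}(t)=\left(\ov{\D_K^{\tilde\rho\circ f}}(t)\right)^{p^k},$$
where $\tilde\rho=\rho_{C_l}\circ\pi$ is the regular representation of $C_l$ pulled back to $C_m$. The composition $\pi\circ f\colon G(K)\to C_l$ is again an epimorphism sending $\mu_K$ to a generator. This reduces the claim to computing the mod $p$ reduction of the twisted Alexander polynomial for the regular representation of $C_l$ with $p\nmid l$.

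We do this computation over $\F_{p^d}$ rather than over $\C$. Since $p\nmid l$, the polynomial $x^l-1$ is separable over $\F_p$ and splits in $\F_{p^d}$, with a primitive $l$-th root of unity $\zeta$. The permutation matrix $\rho_{C_l}(\text{generator})$, reduced mod $p$, is the companion matrix of $x^l-1$. It is therefore diagonalizable over $\F_{p^d}$ as $\mathrm{diag}(1,\zeta,\ldots,\zeta^{l-1})$, and because $C_l$ is abelian and cyclic, the whole representation $\ov{\tilde\rho\circ f}$ is simultaneously diagonalizable. By Lemma \ref{lem:BF}(ii), which is purely formal and so holds over $R=\F_{p^d}$, the polynomial splits as a product over the one-dimensional characters $\mu_K\mapsto\zeta^j$. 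Lemma \ref{lem:BF}(i) over $\F_{p^d}$ gives each factor as $\ov{\D_K}(\zeta^jt)/(\zeta^jt-1)$; its proof is the same Fox-calculus computation with the Alexander matrix evaluated at $\zeta^jt$.

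The main obstacle is compatibility of reduction mod $p$ with Wada's invariant. The integral invariant $\D_K^{\rho\circ f}(t)$ is a ratio $\det M_j/\det\Phi(s_j-1)$ of integral polynomials. Its reduction mod $p$ should be the ratio of the reductions, so we need the denominator to stay nonzero mod $p$. Here $\Phi(s_j-1)=t\rho(\mu)-I$, so its determinant is $\pm(t^m-1)$ up to units, which is monic and hence nonzero mod $p$. So reduction commutes with the Wada construction, and we may compute over $\F_p$. Extending scalars to $\F_{p^d}$ and conjugating by the diagonalizing matrix changes determinants only by units, which are harmless up to the stated indeterminacy $\varepsilon t^l$. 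The same denominator argument already underlies the use of Theorem \ref{modp-thm} in the first step.

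Finally, nonvanishing. The Alexander polynomial satisfies $\D_K(1)=\pm1$, so $\ov{\D_K}(t)$ is nonzero with nonzero constant behaviour at $t=1$. Then $\ov{\D_K}(\zeta^jt)$ is a nonzero polynomial in $t$, being a rescaling of a nonzero polynomial. Each denominator $\zeta^jt-1$ is nonzero. Hence the product, and its $p^k$-th power, is nonzero in $\F_{p^d}(t)$. It lies in $\F_p(t)$ because it equals the reduction of an integral rational function.
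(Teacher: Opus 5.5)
Your proof is correct and follows the paper's argument. You apply Theorem~\ref{modp-thm} with $H=C_{p^k}$, diagonalize the regular representation of $C_l$ over $\F_{p^d}$ as $\mathrm{diag}(1,\zeta,\ldots,\zeta^{l-1})$, and split the result using Lemma~\ref{lem:BF}. The differences are cosmetic: you get diagonalizability from the separable polynomial $x^l-1$ instead of Maschke's theorem, and you spell out two points the paper leaves implicit, both correctly: that reduction mod $p$ commutes with Wada's construction (the denominator is $\pm(t^m-1)$), and that $\ov{\D_K}\neq 0$ because $\D_K(1)=\pm1$.
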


\begin{proof}
As $p \nmid l$, it follows that $C_m\cong C_{p^k}\times C_l$, and thus we can apply Theorem \ref{modp-thm} with $H=C_{p^k}$ and $H\backslash G=C_l$. Moreover, Maschke's theorem implies that the regular representation $\rho_{C_l}\colon C_l\to\mathrm{GL}(l,\Z)$ is diagonalizable over $\F_{p^d}$, and hence the image $\tilde{\rho}(\mu_K)$ is conjugate to $\mathrm{diag}(1,\zeta,\ldots,\zeta^{l-1})$. Therefore we obtain 
\begin{equation}\label{eq:cyclic4}
\ov{\D_K^{\tilde{\rho}\circ f}}(t)=\prod_{j=0}^{l-1}\frac{\ov{\D_K}(\zeta^jt)}{\zeta^jt-1}
\end{equation}
in $\F_p(t)$. Note that each $\ov{\D_K}(\zeta^j t)/(\zeta^j t-1)$ is an element of $\F_{p^d}(t)$, but the product $\prod_{j=0}^{l-1}\ov{\D_K}(\zeta^jt)/(\zeta^jt-1)$ is included in $\F_p(t)$. Using (\ref{eq:cyclic4}) and Theorem \ref{modp-thm}, we derive the required formula. The latter assertion follows from $\ov{\D_K}(\zeta^it)\not=0$ in $\F_{p^d}[t^{\pm1}]$. 
\end{proof}

\begin{remark}
Proposition \ref{pro:cyclic} also follows without using Theorem \ref{modp-thm}, since $\F_{p^d}[C_m]\cong\F_{p^d}[C_{p^k}]\otimes_{\F_{p^d}}\F_{p^d}[C_l]$ for $m=p^kl$ with $p\nmid l$, where $d=\mathrm{ord}_l(p)$. 
\end{remark}

Now, we give the precise statement of Corollary \ref{cor:onlyif} in the introduction. 

\begin{corollary}\label{cor:modp-TAP}
Let $G$ be a finite group of weight one such that $|G'|=p^n$ and $G/G'= C_m$ where $p$ is a prime number, $m=p^kl$, and $p\nmid l$. Then for an epimorphism $f\colon G(K) \to G$ and the regular representation $\rho\colon G\to \mathrm{GL}(mp^n,\Z)$, we have 
$$
\ov{\D_K^{\rho\circ f}}(t)
=
\left(\prod_{j=0}^{l-1}\frac{\ov{\D_K}(\zeta^jt)}{\zeta^jt-1}\right)^{p^{k+n}}
$$
in $\F_p(t)$, where $\zeta$ is a primitive $l$-th root of unity in $\F_{p^d}$ and $d=\mathrm{ord}_l(p)$. In particular, $\D_K^{\rho\circ f}(t)$ is nonzero. 
\end{corollary}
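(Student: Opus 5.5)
The plan is to apply Theorem \ref{modp-thm} with $H=G'$ and then Proposition \ref{pro:cyclic} to the cyclic quotient $G/G'=C_m$.

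\textbf{Step 1: reduce from $G$ to $G/G'$.} Since $G'$ is a normal subgroup of order $p^n$, the normal case of Theorem \ref{modp-thm} gives
$$\D_K^{\rho\circ f}(t)\equiv\left(\D_K^{\tilde\rho\circ f}(t)\right)^{p^n}~(\mathrm{mod}~p).$$
Here $\tilde\rho=\rho_{C_m}\circ\pi$ and $\pi\colon G\to G/G'=C_m$ is the quotient map.

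\textbf{Step 2: apply Proposition \ref{pro:cyclic}.} The composition $\pi\circ f\colon G(K)\to C_m$ is again an epimorphism, and $\tilde\rho\circ f=\rho_{C_m}\circ(\pi\circ f)$. So Proposition \ref{pro:cyclic} applies to $\pi\circ f$ and gives
$$\ov{\D_K^{\tilde\rho\circ f}}(t)=\left(\prod_{j=0}^{l-1}\frac{\ov{\D_K}(\zeta^jt)}{\zeta^jt-1}\right)^{p^k}$$
in $\F_p(t)$. Raising this to the $p^n$-th power and combining with Step 1 yields the stated formula with exponent $p^{k+n}$.

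\textbf{The main obstacle: making ``mod $p$'' precise for rational functions.} Wada's invariant is a quotient $\det M_j/\det\Phi(s_j-1)$, so reducing it modulo $p$ needs care. For permutation representations I would argue directly at the level of matrices.
\begin{itemize}
\item Reduce $\Phi$ modulo $p$ entrywise, giving an $\F_p$-coefficient Wada invariant $\ov{\det M_j}/\ov{\det\Phi(s_j-1)}$.
\item The denominator $\det(t\rho(s_j)-1)$ has nonzero reduction. Its constant term is $\pm1$, since $\rho(s_j)$ is a permutation matrix.
\item Hence the $\F_p$-invariant equals the reduction of the $\Z$-invariant, so the congruence in Theorem \ref{modp-thm} is an equality in $\F_p(t)$, up to units $\pm t^l$.
\item The same observation lets one compute over $\F_{p^d}$ in Proposition \ref{pro:cyclic}, using the block-triangular decomposition together with Lemma \ref{lem:BF}(ii).
\end{itemize}

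\textbf{Step 3: nonvanishing.} The right-hand side is nonzero in $\F_p(t)$. Indeed $\ov{\D_K}(1)=\pm1$, so $\ov{\D_K}\neq0$ and each $\ov{\D_K}(\zeta^jt)\neq0$ in $\F_{p^d}[t^{\pm1}]$. Therefore $\ov{\D_K^{\rho\circ f}}(t)\neq0$. Since reduction modulo $p$ of the zero element is zero, $\D_K^{\rho\circ f}(t)$ is nonzero.
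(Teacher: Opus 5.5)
Your proposal is correct and follows the paper's route exactly. The paper proves this as an immediate consequence of Theorem~\ref{modp-thm}, applied with $H=G'$, together with Proposition~\ref{pro:cyclic}, applied to $\pi\circ f$. Your extra check that $\det\Phi(s_j-1)=\det(t\rho(s_j)-I)$ has constant term $\pm1$ shows that reduction modulo $p$ of Wada's rational invariant is well defined, which makes explicit a point the paper leaves implicit.
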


\begin{proof}
This is an immediate consequence of Theorem \ref{modp-thm} and Proposition \ref{pro:cyclic}. 
\end{proof}

Let us consider some examples. In each example, $f\colon G(K)\to G$ is an epimorphism, and $\rho\colon G\to\mathrm{GL}(|G|,\Z)$ denotes the regular representation. 

\begin{example}\label{ex:A4}
Let $G=A_4=C_2^2\rtimes C_3$, where $G'=C_2^2$ and $G/G'=C_3$. Using Corollary \ref{cor:modp-TAP},  we have 
\begin{equation*}
\ov{\D_K^{\rho\circ f}}(t)=\left(\prod_{j=0}^{2}\frac{\ov{\D_K}(\zeta^jt)}{\zeta^jt-1}\right)^4
\end{equation*}
in $\F_2(t)$, where $\zeta$ is a primitive $3$-rd root of unity in $\F_4$. 
\end{example}

\begin{example}\label{ex:Dic}
Let $G=\mathrm{Dic}_q$ where $q=p^n$ and $p$ is an odd prime number. The dicyclic group $\mathrm{Dic}_q$ fits the following exact sequence: $1\to C_q\to \mathrm{Dic}_q\to C_4\to1$. Using Corollay \ref{cor:modp-TAP}, we have 
$$
\ov{\D_K^{\rho\circ f}}(t)=
\left(\prod_{j=0}^{3}\frac{\ov{\D_K}(\zeta^jt)}{\zeta^jt-1}\right)^q
$$
in $\F_p(t)$, where $\zeta$ is a primitive $4$-th root of unity in $\F_{p^d}$. 
\end{example}

\begin{example}\label{ex:Gmpk}
Let $G=G(m,p|\zeta)=\la a,b\,|\, a^p=b^m=1,bab^{-1}=a^\zeta\ra$ which is isomorphic to the semi-direct product $C_p\rtimes C_m$. Here, $m\in\N$, $p$ is an odd prime number such that $p\equiv1~(\mathrm{mod}~m)$, and $\zeta$ is a primitive $m$-th root of unity in $\F_p$. 
Using Corollary \ref{cor:modp-TAP}, we have 
\begin{equation*}
\ov{\D_K^{\rho\circ f}}(t)=\left(\prod_{j=0}^{m-1}\frac{\ov{\D_K}(\zeta^jt)}{\zeta^jt-1}\right)^p
\end{equation*}
in $\F_p(t)$.
\end{example}

\begin{example}
Let $G=C_3\times \mathrm{Dic}_3=C_3\rtimes C_{12}$, where $G'=C_3$ and $G/G'=C_{12}$. Using Corollary \ref{cor:modp-TAP}, we have 
$$
\ov{\D_K^{\rho\circ f}}(t)
=
\left(\prod_{j=0}^{3}\frac{\ov{\D_K}(\zeta^jt)}{\zeta^jt-1}\right)^{9}
$$
in $\F_3(t)$, where $\zeta$ is a primitive $4$-th root of unity in $\F_9$. 
\end{example}

To conclude this section, we give alternative proof of the following theorem concerning the minimum value of the TAV order function $\ord\colon\K\to\N\cup\{+\infty\}$ (see also \cite[Proposition 2.8]{IMS23-1}). 

\begin{theorem}[{\cite[Theorem 1.2(i)]{IMS23-1}}]\label{thm:24}
For any knot $K$, we have $\ord(K)\geq 24$. 
\end{theorem}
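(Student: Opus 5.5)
We have to show that no TAV group has order less than $24$. By the definition of $\ord$, this amounts to the following: for every finite group $G$ with $|G|<24$, every knot $K$ and every epimorphism $f\colon G(K)\to G$, we must show $\Delta_K^{\rho\circ f}(t)\neq 0$. We may assume $w(G)=1$; otherwise no such epimorphism exists, or $G$ is trivial and the polynomial is $\Delta_K(t)/(t-1)\neq 0$. The idea is that every such $G$ has a normal $p$-subgroup $H$ with an easily understood quotient, so that Corollary \ref{cor:modp-TAP} or Proposition \ref{pro:cyclic} applies. These express the mod $p$ reduction as a power of a product of terms $\ov{\D_K}(\zeta^j t)/(\zeta^j t-1)$. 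Each such term is nonzero because $\ov{\D_K}(t)\neq 0$, since $\D_K(1)=\pm1$.

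The main step is to prove that every finite group $G$ of weight one with $|G|<24$ has commutator subgroup $G'$ a $p$-group. Since $G/G'$ is cyclic for weight one, Corollary \ref{cor:modp-TAP} then gives nonvanishing directly. (This is just the ``only if'' direction of Theorem \ref{thm:main-4}, now derived from Theorem \ref{modp-thm}.) I would organize the argument as follows.

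\textbf{Non-solvable groups.} The smallest non-solvable group has order $60$, so $G$ is solvable.

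\textbf{Nilpotent groups.} If $G$ is nilpotent and $w(G)=1$, then $G$ is cyclic, so $G'$ is trivial. Equivalently, Proposition \ref{pro:cyclic} applies.

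\textbf{Non-nilpotent groups.} If $G$ is solvable but not nilpotent and $|G|<24$, the orders to check are $6,10,12,14,18,20,21,22$. I would list the non-nilpotent groups of these orders: $S_3$, $D_5$, $A_4$, $D_6$, $\mathrm{Dic}_3$, $D_7$, $D_9$, $C_3\times S_3$, $(C_3\times C_3)\rtimes C_2$, $D_{10}$, $\mathrm{Dic}_5$, $C_5\rtimes C_4$, $C_7\rtimes C_3$, $D_{11}$. For each, I would either check that the abelianization is non-cyclic, so $w(G)>1$, or identify $G'$:
\begin{itemize}
\item $S_3$, $D_5$, $D_7$, $D_{11}$, $D_9$, $\mathrm{Dic}_3$, $\mathrm{Dic}_5$, $C_5\rtimes C_4$ and $C_7\rtimes C_3$ all have $G'$ cyclic of prime-power order.
\item For $A_4$ we have $G'=C_2^2$.
\item $D_6$, $D_{10}$, $C_3\times S_3$ and $(C_3\times C_3)\rtimes C_2$ have abelianization $C_2^2$, $C_2^2$, $C_3\times C_2$ and $C_2$ respectively. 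For $C_3\times S_3$ the abelianization is $C_6$, which is cyclic, but $G'=C_3$ is a $3$-group anyway. For the generalized dihedral group $(C_3\times C_3)\rtimes C_2$, the commutator subgroup is $C_3^2$, again a $3$-group.
\end{itemize}

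The main obstacle is making this classification of small groups complete and correct without a long case list. To keep it conceptual, I would argue as follows. Let $G$ be solvable of weight one with $|G|<24$, so $G/G'\cong C_m$ with $m\ge2$ and $|G'|<12$. If $G'$ were not a $p$-group, then $G'$ would have order $6$ or $10$. In that case $|G|=12$, $18$ or $20$, with $m\geq 2$, and $G$ would have order $12$ or $20$, where a direct check shows $G'$ has order at most $5$. One also needs $G'$ nilpotent, and since $G'$ has order less than $12$, the only non-nilpotent possibilities are $S_3$ and $D_5$. I would use this counting to shortcut the enumeration and fall back on the explicit list above where needed.
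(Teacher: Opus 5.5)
Your proposal is correct and follows essentially the same route as the paper. The paper also runs through the weight-one groups of order below $24$: the $23$ cyclic groups, $D_3,D_5,D_7,D_9,D_{11}$, $A_4$, $\mathrm{Dic}_3,\mathrm{Dic}_5$, $C_5\rtimes C_4$, $C_7\rtimes C_3$, $C_3\rtimes D_3$ and $C_3\times D_3$. In each case it uses that $G'$ is a $p$-group to get a nonzero mod $p$ formula from Corollary~\ref{cor:modp-TAP} and its special cases; your solvable/nilpotent sorting is just a tidier way of producing the same list.

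Your closing ``counting shortcut'' paragraph is muddled: it drops order $18$ without reason, and it conflates $G'$ being nilpotent with $G'$ being a $p$-group. It is dispensable, though, because your explicit case list is complete and correct.
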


\begin{proof}
It is known that $35$ finite groups are of weight one, out of $59$ finite groups of order less than $24$ (see \cite{GN}). 
We can apply (\ref{eq:cyclic}) for $23$ abelian finite groups, 
Corollary \ref{cor:dihedral2} for $D_3,D_5,D_7,D_9,D_{11}$, 
Example \ref{ex:A4} for $A_4$, 
Example \ref{ex:Dic} for ${\rm Dic}_3, {\rm Dic}_5$, and 
Example \ref{ex:Gmpk} for $C_5 \rtimes C_4 = G(4,5|2), C_7 \rtimes C_3 = G(3,7|2)$. 
They follow that the twisted Alexander polynomials are nonzero for any knots. Then the remaining groups are only $C_3\rtimes D_3$ and $C_3\times D_3$ out of $59$ finite groups. 

Let $G_1=C_3\rtimes D_3=C_3^2\rtimes_2C_2$, where $G_1'=C_3^2$ and $G_1/G_1'=C_2$. Using 
Corollary \ref{cor:modp-TAP} for $p=3,n=2,k=0,l=2,d=1$, and $\zeta=2\in\F_3$,  we have  
\begin{equation}\label{eq:cyclic9}
\ov{\D_K^{\rho\circ f}}(t)
=
\left(
\frac{\ov{\D_K}(t)}{t-1}\cdot
\frac{\ov{\D_K}(2t)}{2t-1}\right)^{9}
\end{equation}
in $\F_3(t)$. 
Similarly, let $G_2=C_3\times D_3=C_3\rtimes C_6$, where $G_2'=C_3$ and $G_2/G_2'=C_6$. Using Corollary 
\ref{cor:modp-TAP} for $p=3,n=1,k=1,l=2,d=1$, and $\zeta=2\in\F_3$, we have  
\begin{equation}\label{eq:cyclic10}
\ov{\D_K^{\rho\circ f}}(t)
=
\left(
\frac{\ov{\D_K}(t)}{t-1}\cdot
\frac{\ov{\D_K}(2t)}{2t-1}
\right)^9
\end{equation}
in $\F_3(t)$. Hence, the twisted Alexander polynomials for these two finite groups are nonzero for any knots. 

This completes the proof of Theorem \ref{thm:24}. 
\end{proof}

\begin{remark}
In the proof of Theorem \ref{thm:24}, 
$G_2$ can also be expressed as $C_3^2 \rtimes C_2$. 
Then by applying Corollary \ref{cor:modp-TAP} with respect to this semi-direct product, namely, 
for $p=3,n=2,k=0,l=2,d=1$, and $\zeta=2\in\F_3$,  we obtain the same formula  (\ref{eq:cyclic10}). 
Moreover, we can take $C_9, C_3^2, C_3^2$ as normal subgroups of $D_9, G_1, G_2$ respectively, 
then all the orders are the same $9=3^2$ and all the quotients are the same $C_2$. 
Hence the formulas (\ref{eq:cyclic9}), (\ref{eq:cyclic10}), and Corollary \ref{cor:dihedral2} when $p^n=3^2$ coincide, 
even though they have different commutator subgroups. 
Such phenomena commonly arise in mod $p$ formulas. 
\end{remark}

\section{Twisted Alexander polynomials for a central extension}\label{sec:extension}

In this section, we provide a formula for the twisted Alexander polynomial of a central extension of a finite group by a cyclic group. Using the formula, we can find an important fact about the minimality of the TAV groups that appear in a series of central extensions (see Corollary \ref{cor:smallest}).  

Let $G_{k,1}$ be a finite group with the abelianization $C_k=\la \bar{x}\,|\,\bar{x}^k\ra$, namely, it is of weight one and fits the following short exact sequence: 
$$
1\longrightarrow H \longrightarrow G_{k,1} \overset{\pi}{\longrightarrow} C_{k} \longrightarrow 1,
$$
where $H$ is the commutator subgroup of $G_{k,1}$. We then define a group $G_{k,n}$ as the pull-back of two epimorphisms $\pi\colon G_{k,1}\to C_k$ and $\pi_k\colon C_{kn}=\la x\,|\,x^{kn}\ra\to C_k$, that is, $G_{k,n}=\{(z,x^l)\in G_{k,1}\times C_{kn}\,|\,\pi(z)=\pi_k(x^l)\}$:
\begin{equation*}
\begin{CD}
1@>>> H @>>> G_{k,n} @>{\pr_2}>> C_{kn} @>>> 1\\
@. @VV{\mathrm{id}_H}V @VV{\pr_1=\,\pr}V @VV{\pi_k}V \\
1@>>> H @>>> G_{k,1} @>{\pi}>> C_{k} @>>> 1
\end{CD}
\end{equation*}
where $\pr_j$ is the projection onto the $j$-th  component. We note that $\pr_2\colon G_{k,n}\to C_{kn}$ is the abelianization, and the inclusion $\iota\colon C_n=\la y\,|\,y^n\ra\to G_{k,n},\,y\mapsto (e,x^k)$ is a homomorphism such that 
$$
1\longrightarrow C_n \overset{\iota}{\longrightarrow} G_{k,n}\overset{\pr}{\longrightarrow} G_{k,1}\longrightarrow 1
$$ 
is a central extension. Note that the weight of $G_{k,n}$ is one. 
We call the group $G_{k,n}$ the \textit{$n$-th central extension} of $G_{k,1}$. If the group $G_{k,1}$ does not appear as the $n$-th central extension $(n\geq2)$ of any other group, we call it a \textit{seed} of any series of central extensions. At the end of this section, we discuss a characterization of seed. 

Now, we prepare the following lemma for application. 

\begin{lemma}\label{lem:lifting}
For a given epimorphism $f_1\colon G(K) \to G_{k,1}$, there exists an epimorphism $f_n\colon G(K) \to G_{k,n}$ such that $\pr\circ f_n=f_1$. 
\end{lemma}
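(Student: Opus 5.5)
The lift is given by the pull-back property of $G_{k,n}$. The abelianization $\phi\colon G(K)\to\Z=\la t\ra$ sends the meridian $\mu_K$ to $t$. Let $\psi\colon G(K)\to C_{kn}$ be the composition of $\phi$ with the map $t\mapsto x$. I first check that $\pi\circ f_1=\pi_k\circ\psi$, possibly after adjusting generators. The map $\pi\circ f_1\colon G(K)\to C_k$ is an epimorphism onto an abelian group, so it factors through $\phi$. It therefore sends $\mu_K$ to a generator $\bar{x}^{a}$ of $C_k$ with $\gcd(a,k)=1$.

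The next step is to lift $a$ so that the lift is prime to $kn$. I will choose an integer $a'$ with $a'\equiv a\pmod k$ and $\gcd(a',kn)=1$. Such an $a'$ exists by the standard fact that $(\Z/kn)^\times\to(\Z/k)^\times$ is surjective; explicitly, take $a'=a+k\cdot b$, where $b$ is the product of the primes dividing $n$ but not $a$ (or $k$). Set $\psi(\mu_K)=x^{a'}$, extended through $\phi$. Then $\pi_k\circ\psi=\pi\circ f_1$, so the universal property gives a homomorphism
\[
f_n=(f_1,\psi)\colon G(K)\to G_{k,n}
\]
with $\pr\circ f_n=f_1$. An alternative is to replace $\bar{x}$ by $\bar{x}^{a}$ as the chosen generator, which gives $a=1$ and $\psi(\mu_K)=x$.

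The main point is surjectivity. The image $L=f_n(G(K))$ surjects onto $G_{k,1}$ via $\pr$, and surjects onto $C_{kn}$ via $\pr_2$ because $x^{a'}$ generates $C_{kn}$. Since $\ker\pr=\iota(C_n)$ is central, $L\cdot\iota(C_n)=G_{k,n}$, and it suffices to show $\iota(C_n)\subset L$. Here I use that $\pr_2$ is the abelianization of $G_{k,n}$, so its kernel $H\times\{e\}$ is the commutator subgroup $G_{k,n}'$. Because $L\cdot Z=G_{k,n}$ with $Z=\iota(C_n)$ central, we get $G_{k,n}'=[LZ,LZ]=L'\subset L$. So $L\supset\ker\pr_2$ and $\pr_2(L)=C_{kn}$, which forces $L=G_{k,n}$, hence $f_n$ is onto.

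The only delicate point is the identification $G_{k,n}'=H\times\{e\}$. The paper states it, and it also follows directly: $\pr$ restricts to an isomorphism from $H\times\{e\}$ onto $H=G_{k,1}'$, commutators of $G_{k,n}$ have trivial $C_{kn}$-component, and every element of $H=G_{k,1}'$ is a product of commutators of $G_{k,1}$. Lifting those commutators to $G_{k,n}$, which is possible since $\pr$ is onto, gives the reverse inclusion.
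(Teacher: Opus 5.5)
Your proof is correct and uses the same lift as the paper, $f_n=(f_1,\bar\phi)$ with $\bar\phi$ the abelianization reduced mod $kn$. You are more careful than the paper here: it calls $(f_1,\bar\phi)$ ``clearly'' a homomorphism into $G_{k,n}$ without checking that the identification $\Z/kn\Z\cong C_{kn}$ is compatible with $\pi\circ f_1$. Your choice of $a'\equiv a\pmod k$ with $\gcd(a',kn)=1$ secures $\pi\circ f_1=\pi_k\circ\psi$ for an arbitrary $f_1$.

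For surjectivity the paper gets $\ker\pr_2\subset L$ more directly. Since $f_1$ is onto, $f_n(G(K)')=f_1(G(K)')\times\{e\}=H\times\{e\}=\ker\pr_2$. This needs neither the centrality of $\iota(C_n)$ nor your separate check that $G_{k,n}'=H\times\{e\}$.
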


\begin{proof}
Let $\bar{\phi}\colon G(K)\to C_{kn}$ be the composition of the abelianization $\phi\colon G(K)\to\Z$ and the projection $\Z\to\Z/kn\Z\cong C_{kn}$. 
We define a lift $f_n\colon G(K)\to G_{k,n}$ to be $f_n(u)=(f_1(u),\bar{\phi}(u))$ for $u\in G(K)$. It is clearly a homomorphism and satisfies $\pr\circ f_n=f_1$. Since $f_1(G(K)') = H$, we have $f_n(G(K)') = H \times \{e\} = G_{k,n}'$.
Also, since $\bar{\phi}$ is an epimorphism, there is an element $u \in
G(K)$ such that $\bar{\phi}(u) = {\rm pr}_2 \circ f_n(u) \in C_{kn}$ is a generator of $C_{kn}$. Because ${\rm pr}_2 \colon G_{k,n} \to C_{kn}$ is the abelianization, $G_{k,n}$ is generated by $G_{k,n}' = f_n(G(K)')$ and $f_n(u)$, which means that $f_n$ is surjective.
\end{proof}

Let us assume that there exists an epimorphism $f_n\colon G(K) \to G_{k,n}$. Then we have the following formula for the twisted Alexander polynomial associated with the regular representation $\rho_n\colon G_{k,n}\to \mathrm{GL}(kn|H|,\C)$. For simplicity, let us denote by $\tilde{\rho}_1$ the composition of $\pr\colon G_{k,n}\to G_{k,1}$ and the regular representation $\rho_1\colon G_{k,1}\to \mathrm{GL}(k|H|,\C)$; i.e., $\tilde{\rho}_1=\rho_1\circ\mathrm{pr}$. Moreover, let $i=\sqrt{-1}$. 

\begin{theorem}[Theorem \ref{thm:main-2}]\label{thm:central}
$\displaystyle{
\D_K^{\rho_n\circ f_n}(t)
=
\prod_{j=0}^{n-1}
\D_K^{\tilde{\rho}_1 \circ f_n}\big(e^{\frac{2\pi i}{kn}j}t\big).}$
\end{theorem}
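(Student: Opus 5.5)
We decompose the regular representation $\rho_n$ of $G_{k,n}$ over $\C$ using the central subgroup $\iota(C_n)=\la y\ra$, where $y=(e,x^k)$. Since $y$ is central of order $n$, $\C[G_{k,n}]$ splits as a direct sum of $G_{k,n}$-subrepresentations
$$
\C[G_{k,n}]=\bigoplus_{j=0}^{n-1}V_j,\qquad V_j=\{v\in\C[G_{k,n}]\mid vy=\omega^{j}v\},\quad \omega=e^{\frac{2\pi i}{n}},
$$
with $V_j=\C[G_{k,n}]\,e_j$ for the central idempotent $e_j=\frac1n\sum_{m=0}^{n-1}\omega^{-jm}y^m$. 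By Lemma \ref{lem:BF}(ii), applied to a block-diagonal form, we get $\D_K^{\rho_n\circ f_n}(t)=\prod_{j}\D_K^{\sigma_j\circ f_n}(t)$, where $\sigma_j$ is the representation on $V_j$. Note that $V_0$ is exactly the pull-back of the regular representation of $G_{k,1}=G_{k,n}/C_n$, so $\sigma_0\cong\tilde{\rho}_1$. This is the $j=0$ factor.

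The key step is to identify each $\sigma_j$ with $\tilde{\rho}_1\otimes\chi_j$ for a one-dimensional character $\chi_j$ of $G_{k,n}$. We use the abelianization $\pr_2\colon G_{k,n}\to C_{kn}$ and define $\chi_j(g)=\zeta^{j m}$ if $\pr_2(g)=x^m$, where $\zeta=e^{\frac{2\pi i}{kn}}$. Then $\chi_j(y)=\zeta^{jk}=\omega^j$, so $\chi_j$ restricted to $C_n$ is the character defining $V_j$. Hence $V_0\otimes\chi_j$ is a representation of dimension $k|H|$ on which $y$ acts by $\omega^j$, and it is a submodule of $\C[G_{k,n}]\otimes\chi_j\cong\C[G_{k,n}]$ (the regular representation is unchanged by twisting with a character). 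Dimensions match, since each $V_j$ has dimension $|G_{k,n}|/n=k|H|$ by the idempotent decomposition, so $V_j\cong V_0\otimes\chi_j$, i.e.\ $\sigma_j\cong\tilde{\rho}_1\otimes\chi_j$. This comparison, including the check that the twisted regular representation is again regular and that its $\omega^j$-eigenspace is exactly $V_0\otimes\chi_j$, is where care is needed. It is the main obstacle, though it is elementary character theory: both sides have the same character, since characters of regular representations vanish off $e$, and the $y$-isotypic projections match.

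Finally, we compute the twisted polynomial of a representation tensored with $\chi_j\circ f_n$. Since $\pr_2\circ f_n=\bar\phi$, the character $\chi_j\circ f_n$ sends every element $u$ to $\zeta^{j\phi(u)}$, so $(\phi\otimes(\tilde{\rho}_1\otimes\chi_j)\circ f_n)(u)=(\zeta^jt)^{\phi(u)}\tilde{\rho}_1 f_n(u)$. Thus every matrix in Wada's definition (the Fox Jacobian $M_j$ and $\Phi(s_j-1)$) is obtained by substituting $t\mapsto\zeta^jt$, giving $\D_K^{\sigma_j\circ f_n}(t)=\D_K^{\tilde{\rho}_1\circ f_n}(\zeta^jt)$ up to units. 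Multiplying over $j=0,\dots,n-1$ yields the formula. Here we use that $f_n$ is the lift from Lemma \ref{lem:lifting}, or more generally that $\pr_2\circ f_n$ agrees with $\bar\phi$ up to an automorphism of $C_{kn}$, which only permutes the factors.
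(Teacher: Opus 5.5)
Your proof is correct and follows essentially the paper's route. The paper likewise shows $\rho_n\cong\bigoplus_{j=0}^{n-1}\tau_j\otimes\tilde{\rho}_1$ with $\tau_j=\eta_j\circ\pr_2$ (your $\chi_j$), then applies Lemma \ref{lem:BF}(ii) and the substitution $t\mapsto e^{\frac{2\pi i}{kn}j}t$; the only difference is that it obtains the decomposition by a direct character computation (Lemma \ref{pro:regular2}) rather than by splitting into eigenspaces of the central element $y$. Your remark about a general $f_n$ covers a normalization the paper leaves implicit, and it is valid because $\sigma_j$ depends only on $j$ modulo $n$ and multiplication by a unit modulo $kn$ permutes the residues modulo $n$.
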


Namely, the twisted Alexander polynomial of the $n$-th central extension $G_{k,n}$ is determined by that of $G_{k,1}$. 

\begin{remark}
Similar to Theorem \ref{modp-thm} in the previous section, 
Theorem \ref{thm:central} also holds for a finite CW complex $X$ and $\phi\in H^1(X;\Z)=\mathrm{Hom}(\pi_1(X),\Z)$ an epimorphism, which is non-necessarily abelianization. 
\end{remark}

\begin{definition}
Let $G$ be a finite group. 
We define $\MV(G)$ to be the set of knots $K$ such that $K$ admits $G$ as a TAV group. 
\end{definition}

Note that $\MV(G)\neq\emptyset$ if and only if $G$ is a TAV group. Using Lemma \ref{lem:lifting} and Theorem \ref{thm:central}, we can obtain the following corollary. 

\begin{corollary}\label{cor:smallest}
The following two conditions are equivalent:
\begin{itemize}
\item[(i)]
There exists an epimorphism $f_1\colon G(K) \to G_{k,1}$ such that $\D_K^{\rho_1\circ f_1}(t)=0$.
\item[(ii)]
There exists an epimorphism $f_n\colon G(K) \to G_{k,n}$ such that $\D_K^{\rho_n\circ f_n}(t)=0$.
\end{itemize}
In particular, the set $\mathcal{V}(G_{k,1})$ coincides with $\mathcal{V}(G_{k,n})$ for any $n\geq 2$. Accordingly, if $G_{k,n}~(n\geq2)$ is a TAV group, then it is not the smallest for any non-fibered knots. 
\end{corollary}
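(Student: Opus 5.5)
The equivalence of (i) and (ii) will follow from Lemma \ref{lem:lifting} and Theorem \ref{thm:central}, once we observe that the twisted Alexander polynomial for $\tilde{\rho}_1\circ f_n$ is that for $\rho_1\circ f_1$. The two "in particular" statements are then formal consequences.

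\emph{(i) $\Rightarrow$ (ii).} Given $f_1$ with $\D_K^{\rho_1\circ f_1}(t)=0$, Lemma \ref{lem:lifting} gives an epimorphism $f_n\colon G(K)\to G_{k,n}$ with $\pr\circ f_n=f_1$. Then $\tilde{\rho}_1\circ f_n=\rho_1\circ\pr\circ f_n=\rho_1\circ f_1$, so $\D_K^{\tilde{\rho}_1\circ f_n}(t)=0$ identically. In Theorem \ref{thm:central} the $j=0$ factor of the product is already $0$, hence $\D_K^{\rho_n\circ f_n}(t)=0$.

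\emph{(ii) $\Rightarrow$ (i).} Given $f_n$ with $\D_K^{\rho_n\circ f_n}(t)=0$, set $f_1=\pr\circ f_n$; it is surjective because both $\pr$ and $f_n$ are. By Theorem \ref{thm:central},
\[
\prod_{j=0}^{n-1}\D_K^{\rho_1\circ f_1}\big(e^{\frac{2\pi i}{kn}j}t\big)=0
\]
in the field $\C(t)$. A field has no zero divisors, so some factor vanishes. Each factor is the image of $\D_K^{\rho_1\circ f_1}(t)$ under the field automorphism $t\mapsto e^{\frac{2\pi i}{kn}j}t$ of $\C(t)$, so that factor vanishes exactly when $\D_K^{\rho_1\circ f_1}(t)=0$. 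The only point needing care is well-definedness: the polynomial is defined only up to units $\varepsilon t^l$, and we must check this ambiguity does not affect vanishing. It does not, since multiplying by a unit preserves being zero.

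\emph{Consequences.} By definition, $K\in\MV(G)$ exactly when some epimorphism $G(K)\to G$ has vanishing twisted Alexander polynomial for the regular representation. So the equivalence, applied knot by knot, gives $\MV(G_{k,1})=\MV(G_{k,n})$. For the last claim, let $n\ge2$, let $G_{k,n}$ be a TAV group, and take any $K\in\MV(G_{k,n})$. Then $K\in\MV(G_{k,1})$, so $G_{k,1}$ is a TAV group of $K$ with $|G_{k,1}|=k|H|<kn|H|=|G_{k,n}|$. Therefore $G_{k,n}$ is not a smallest TAV group of $K$, i.e.\ $\ord(K)<|G_{k,n}|$. The main step is simply invoking Theorem \ref{thm:central}; everything else is bookkeeping.
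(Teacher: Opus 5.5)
Your proof is correct and is essentially the argument the paper has in mind; the paper only remarks that the corollary follows from Lemma~\ref{lem:lifting} and Theorem~\ref{thm:central}. For (i)$\Rightarrow$(ii) you lift $f_1$ and observe that the $j=0$ factor is $\D_K^{\rho_1\circ f_1}(t)$; for (ii)$\Rightarrow$(i) you set $f_1=\pr\circ f_n$ and use that $\C(t)$ has no zero divisors, and the equality $\MV(G_{k,1})=\MV(G_{k,n})$ together with $|G_{k,1}|=k|H|<kn|H|=|G_{k,n}|$ gives the remaining claims as you state.
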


Before proving Theorem \ref{thm:central}, let us see some examples. 

\begin{example}\label{ex:extension}
(i) Let $H=C_q$ be the cyclic group of odd order $q$ and $k=2$. Then the group $C_q\rtimes C_{2n}$ is a central extension of the dihedral group $D_q=C_q\rtimes C _2$:
$$
1\longrightarrow C_n \longrightarrow C_q\rtimes C_{2n} \overset{\pr}{\longrightarrow} D_q \longrightarrow 1.
$$
In particular, when $n=2$, $C_q\rtimes C_4$ is the dicyclic group $\mathrm{Dic}_q$ appeared in Sections \ref{sec:2} and \ref{sec:modp}. Using Theorem \ref{thm:central}, we have 
$$
\D_K^{\rho_2\circ f_2}(t)
=
\D_K^{\tilde{\rho}_1 \circ f_2}(t)\cdot
\D_K^{\tilde{\rho}_1 \circ f_2}\left(it\right).
$$
Similarly, when $n=3$, 
$$
\D_K^{\rho_3\circ f_3}(t)
=
\D_K^{\tilde{\rho}_1 \circ f_3}(t)\cdot
\D_K^{\tilde{\rho}_1 \circ f_3}\big(e^{\frac{\pi i}{3}}t\big)\cdot
\D_K^{\tilde{\rho}_1 \circ f_3}\big(e^{\frac{2\pi i}{3}}t\big)
$$
holds. Hence, Corollary \ref{cor:smallest} implies that 
$\mathcal{V}(D_q)=\mathcal{V}(\mathrm{Dic}_q)=\mathcal{V}(C_q\rtimes C_6)$ holds.  

(ii) Let $H$ be the alternating group $A_4$. Then we have a central extension 
of the symmetric group $S_4=A_4\rtimes C_2$:
$$
1\longrightarrow C_n \longrightarrow A_4\rtimes C_{2n} \overset{\pr}{\longrightarrow} S_4 \longrightarrow 1.
$$
Thus, Corollary \ref{cor:smallest} implies that 
$\mathcal{V}(S_4)=\mathcal{V}(A_4\rtimes C_{2n})$ holds for $n\geq 2$. 

(iii) Let $H$ be $\mathrm{SL}_2(\F_3)$, and $G_{k,1}$ a non-split extension of $C_2$ by $\mathrm{SL}_2(\F_3)$; i.e. $G_{k,1}=\mathrm{SL}_2(\F_3).C_2$. Then we have a central extension $G_{k,n}=\mathrm{SL}_2(\F_3).C_{2n}$ of $G_{k,1}$: 
$$
1\longrightarrow C_{n} \longrightarrow G_{k,n} \overset{\pr}{\longrightarrow} G_{k,1} \longrightarrow 1.
$$
Hence, Corollary \ref{cor:smallest} implies that $\mathcal{V}(G_{k,1})=\mathcal{V}(G_{k,n})$ holds for $n\geq2$. 
\end{example}

Let $\eta_j\colon C_{kn}\to \mathrm{GL}(1,\C)$ be a one-dimensional representation defined by $\eta_j(x)=\exp\big(\frac{2\pi i}{kn}j\big)$, and $\tau_j\colon G_{k,n}\to \mathrm{GL}(1,\C)$ the composition of $\pr_2\colon G_{k,n}\to C_{kn}$ and $\eta_j$. We then define a representation $\tau_{n,j}\colon G_{k,n}\to \mathrm{GL}(k|H|,\C)$ by $\tau_{n,j}=\tau_j\otimes \tilde{\rho}_1$. It is easy to see that 
$$
\tau_{n,j}(z,x^l)=(\tau_j\otimes\tilde{\rho}_1)(z,x^l)
=\tau_j(z,x^l)\otimes\tilde{\rho}_1(z,x^l)
=\eta_j(x^l)\otimes\rho_1(z)
$$
hold for $(z,x^l)\in G_{k,n}$. 

\begin{lemma}\label{pro:regular2}
The representation $\bigoplus_{j=0}^{n-1}\tau_{n,j}$ is conjugate to 
the regular representation $\rho_{n}\colon G_{k,n}\to \mathrm{GL}(kn|H|,\C)$. 
\end{lemma}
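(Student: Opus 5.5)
Since all the representations involved are complex representations of the finite group $G_{k,n}$, Maschke's theorem makes two of them conjugate exactly when they have the same character. The plan is therefore to compare the characters of $\bigoplus_{j=0}^{n-1}\tau_{n,j}$ and $\rho_n$ pointwise on $G_{k,n}$. Both have degree $n\cdot k|H| = kn|H| = |G_{k,n}|$. The group $G_{k,n}$ has order $kn|H|$ because it fits into $1\to H\to G_{k,n}\to C_{kn}\to 1$.

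The character of the regular representation is $\chi_{\rho_n}(g)=|G_{k,n}|$ if $g=e$ and $0$ otherwise. For the sum, I would use $\tau_{n,j}=\tau_j\otimes\tilde\rho_1$ to write
$$\chi(z,x^l)=\sum_{j=0}^{n-1}\eta_j(x^l)\,\chi_{\rho_1}(z)=\chi_{\rho_1}(z)\sum_{j=0}^{n-1}e^{\frac{2\pi i}{kn}jl}.$$
The factor $\chi_{\rho_1}(z)$ vanishes unless $z=e$, in which case it equals $k|H|$. So only elements of the form $(e,x^l)$ need attention. The pull-back condition $\pi(e)=\pi_k(x^l)$ then forces $k\mid l$; write $l=km$ with $0\le m<n$. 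These are exactly the elements $\iota(y^m)$ of the central subgroup $C_n$.

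On such an element the sum becomes $\sum_{j=0}^{n-1}e^{2\pi i jm/n}$. This equals $n$ when $m\equiv 0 \pmod n$ and $0$ otherwise, by the standard sum over $n$-th roots of unity. Hence $\chi$ equals $k|H|\cdot n=|G_{k,n}|$ at the identity $(e,x^0)$ and vanishes everywhere else, so it agrees with $\chi_{\rho_n}$. The two representations are therefore conjugate in $\mathrm{GL}(kn|H|,\C)$.

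The only delicate point is using the pull-back condition to reduce to $l\in k\Z$. Without it, the roots-of-unity sum over $j<n$ with exponent $l/(kn)$ would not collapse cleanly. Everything else is routine character bookkeeping.
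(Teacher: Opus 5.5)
Your proposal is correct and follows the paper's proof. Both compare characters, use that $\chi_{\rho_1}(z)$ vanishes for $z\neq e$, and apply the pull-back condition to write $l=k\bar{l}$ when $z=e$. Both then finish with the sum over $n$-th roots of unity (the paper's Lemma~\ref{lem:sum}); you just state the pull-back step more explicitly than the paper does.
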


\begin{proof}
Note that for a finite group $G$, a given representation $\rho\colon G\to\mathrm{GL}(|G|,\C)$ is conjugate to the regular representation of $G$ if and only if the character $\chi_\rho$ satisfies $\chi_\rho(x)=|G|$, if $x=e$, and $\chi_\rho(x)=0$, otherwise. 
For $(z,x^l)\in G_{k,n}~(0\leq l\leq kn-1)$, we obtain 
\begin{align*}
\chi_{\bigoplus_{j=0}^{n-1}\tau_{n,j}}(z,x^l)
&=
\sum_{j=0}^{n-1}\chi_{\tau_{n,j}}(z,x^l)
=
\sum_{j=0}^{n-1}\mathrm{tr}\big(\tau_{n,j}(z,x^l)\big)\\
&=
\sum_{j=0}^{n-1}\mathrm{tr}\big(\eta_j(x^l)\otimes\rho_1(z)\big)\\
&=
\sum_{j=0}^{n-1}\mathrm{tr}\big(\large(e^{\frac{2\pi i}{kn}j}\large)^l\cdot \rho_1(z)\big)
=
\sum_{j=0}^{n-1}
e^{\frac{2\pi i}{kn}jl}\cdot\mathrm{tr}\large(\rho_1(z)\large)\\
&=
\begin{cases}
\displaystyle{
\sum_{j=0}^{n-1}e^{\frac{2\pi i}{n}j\bar{l}}\cdot k|H|
}
, & \text{if $z=e$ and $l=k\bar{l}~(0\leq \bar{l}\leq n-1)$}\\
0, & \text{otherwise}
\end{cases}\\
&=
\begin{cases}
kn|H|, & \text{if $z=e$ and $l=0$ by Lemma \ref{lem:sum} below}\\
0, & \text{otherwise}.
\end{cases}
\end{align*}
This completes the proof of Lemma \ref{pro:regular2}.
\end{proof}

\begin{lemma}\label{lem:sum}
$\displaystyle{
\sum_{j=0}^{n-1}e^{\frac{2\pi i}{n}dj}
=
\begin{cases}
n,& d=0\\
0,& 0<d<n
\end{cases}
}$
\end{lemma}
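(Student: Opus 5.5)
The claim is the finite geometric series identity for roots of unity, so I would prove it directly. Set $\omega=e^{\frac{2\pi i}{n}d}$, so that the sum equals $\sum_{j=0}^{n-1}\omega^j$. The argument splits into the two cases of the statement according to whether $\omega=1$.

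If $d=0$, then $\omega=1$. Every summand equals $1$, and the sum is $n$.

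If $0<d<n$, then $\omega^n=e^{2\pi i d}=1$. Moreover $\omega\neq 1$, because $e^{2\pi i\theta}=1$ holds exactly for $\theta\in\Z$, and here $\theta=d/n$ lies strictly between $0$ and $1$. We may therefore divide by $\omega-1$ and use the geometric series formula:
$$\sum_{j=0}^{n-1}\omega^j=\frac{\omega^n-1}{\omega-1}=0.$$

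There is no substantial obstacle. The only point needing care is the verification $\omega\neq1$, which uses the strict bounds $0<d<n$ exactly as stated. This is also the form in which the lemma enters the proof of Lemma \ref{pro:regular2}, where $d=\bar{l}$ satisfies $0\leq\bar{l}\leq n-1$.
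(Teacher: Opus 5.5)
Your proof is correct and is essentially the paper's argument. The paper also uses that $\omega=e^{\frac{2\pi i}{n}d}$ satisfies $\omega^n=1$ and $\omega\neq1$, and deduces that the sum vanishes from $(\omega-1)(\omega^{n-1}+\cdots+\omega+1)=\omega^n-1=0$; your version is cleaner because it names $\omega$ and justifies $\omega\neq1$ via $0<d/n<1$, whereas the paper states nontriviality for $(e^{\frac{2\pi i}{n}j})^d$, which read literally fails at $j=0$.
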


\begin{proof}
The case $d=0$ is clear. When $0<d<n$, we have $\big(\large(e^{\frac{2\pi i}{n}j}\large)^d\big)^n=e^{2\pi idj}=1$ and $\large(e^{\frac{2\pi i}{n}j}\large)^d\not=1$. 
In general, suppose that $(\eta^d)^n=1$ and $\eta^d\not=1$, then 
we obtain $\sum_{j=0}^{n-1}(\eta^d)^j=0$ because $(\eta^d-1)\big((\eta^d)^{n-1}+(\eta^d)^{n-2}+\cdots+\eta^d+1\big)=0$ holds.
\end{proof}

\begin{proof}[Proof of Theorem \ref{thm:central}] 
By Lemma \ref{pro:regular2}, we see that the composition $\rho_n\circ f_n$ is conjugate to 
\begin{align*}
\big(
\bigoplus_{j=0}^{n-1}
\tau_{n,j}
\big)\circ f_n
=
\big(
\bigoplus_{j=0}^{n-1}
\tau_j\otimes\tilde{\rho}_1
\big)\circ f_n
=
\bigoplus_{j=0}^{n-1}(\tau_j\circ f_n)\otimes
(\tilde{\rho}_1 \circ f_n).
\end{align*}
Hence, 
\begin{align*}
\D_K^{\rho_n\circ f_n}(t)
&=
\prod_{j=0}^{n-1}\D_K^{(\tau_j\circ f_n)\otimes(\tilde{\rho}_1 \circ f_n)}(t)
=
\prod_{j=0}^{n-1}\D_K^{\tilde{\rho}_1 \circ f_n}\big(
e^{\frac{2\pi i}{kn}j}t\big)
\end{align*}
holds as desired.
\end{proof}

Finally we end this section with a characterization of seed of a series of central extensions. 

\begin{proposition}\label{pro:seed}
Let $G$ be a finite group of weight one. Then $G$ is not a seed if and only if there exists a nontrivial cyclic group $C$ in the center $Z(G)$ such that $C\cap G'=\{e\}$. 
\end{proposition}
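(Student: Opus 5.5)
We prove both directions directly from the definition of $G_{k,n}$ as a pull-back. Throughout, $G$ has weight one, so $G/G'\cong C_m$ is cyclic for some $m$.

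($\Rightarrow$) Suppose $G$ is not a seed, so $G\cong G_{k,n}$ for some finite group $G_{k,1}$ of weight one and some $n\ge 2$. The subgroup $C=\iota(C_n)=\{(e,x^{kj})\}$ is a nontrivial cyclic group, and it is central because $\iota$ gives a central extension. Since $\mathrm{pr}_2$ is the abelianization, $G_{k,n}'=\ker \mathrm{pr}_2=H\times\{e\}$. Hence $C\cap G'=\{(e,e)\}$, because $(e,x^{kj})$ lies in $H\times\{e\}$ only when $x^{kj}=e$.

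($\Leftarrow$) Suppose $C\subset Z(G)$ is nontrivial cyclic with $C\cap G'=\{e\}$, and put $n=|C|\ge2$. Set $G_{k,1}=G/C$. Its commutator subgroup is $G'C/C\cong G'$, using $C\cap G'=\{e\}$. It has weight one, since it is a quotient of $G$. Its abelianization is $G/G'C\cong C_k$ with $k=m/n$, because the image of $C$ in $G/G'$ is isomorphic to $C$, of order $n$.

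Define $\Psi\colon G\to G_{k,1}\times C_{m}$ by $g\mapsto (gC,\ \mathrm{ab}(g))$, where $\mathrm{ab}\colon G\to G/G'\cong C_m=C_{kn}$. The image lies in the pull-back, since both components map to the same element of $G/G'C\cong C_k$.

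$\Psi$ is injective: if $gC=C$ and $g\in G'$, then $g\in C\cap G'=\{e\}$.

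For surjectivity, compare orders:
$$|G_{k,n}|=|G_{k,1}|\cdot|C_{kn}|/|C_k|=(|G|/n)\cdot n=|G|.$$
Hence $G\cong G_{k,n}$ with $n\ge2$, so $G$ is not a seed. One should also check that the identification is the one intended by the paper, namely that the map $G_{k,1}\to C_k$ in the pull-back is the abelianization and $C_{kn}\to C_k$ is the natural projection. This holds after fixing compatible generators: choose a generator $x$ of $G/G'$ and let $\bar x$ be its image in $G/G'C$.

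The main obstacle is this bookkeeping in the converse: verifying that $G/C$ has commutator subgroup $G'C/C$, that its abelianization has order exactly $m/n$, and that the orders match. Each step is elementary once $C\cap G'=\{e\}$ is used.
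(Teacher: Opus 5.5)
Your proof is correct and follows the same route as the paper. For the forward direction you use $\iota(C_n)=\langle (e,x^k)\rangle$, and for the converse you take $G_{k,1}=G/C$; there you go further than the paper by checking, via $g\mapsto (gC,\mathrm{ab}(g))$ and the count $|G_{k,n}|=n|G_{k,1}|$, that $G$ really is isomorphic to the pull-back $G_{k,n}$, where the paper only notes that $G$ is a central extension of $G/C$ by $C$. The one caveat, which the paper shares, is the degenerate case $C=G$ (e.g.\ $G=C_p$): then $G/C$ is trivial and has weight $0$, not one, so the construction needs $G_{1,1}=\{e\}$ to be allowed.
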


\begin{proof}
Assume that $G=G_{k,n}~(n\geq 2)$. Let $C_n$ be a subgroup of $G_{k,n}$ generated by $(e,x^k)$. Then $C_n\subset Z(G_{k,n})$ and $C_n\cap H=\{e\}$ hold, where $H$ is the commutator subgroup of $G_{k,n}$. 

Conversely, if there exists a nontrivial cyclic group $C$ in $Z(G)$ such that $C\cap G'=\{e\}$, we can define the group $G_{k,1}$ by $G/C$. Namely, $G$ is a central extension of $G_{k,1}$ by $C$. 
\end{proof}

\section{Concluding remark}\label{sec:remark}

As mentioned in the introduction, the TAV order of a knot induces the TAV order function $\ord\colon\K\to\N\cup\{+\infty\}.$
Using Theorem \ref{thm:main-4}, we know that the image of the restriction of the TAV order function $\ord|_{\mathcal{N}} \colon \mathcal{N}\to\mathbb{N}$ is included in the set 
$$
\{24,30,42,48,60,66,70,72,78,84,90,96,102,110,114,120,126,\ldots\}
$$
consisting of orders of TAV groups $G$ (see \cite[Section 5]{IMS23-1}). However, it is not known whether these values are actually realized as the TAV orders of non-fibered knots. We then proposed the following problem in \cite{IMS23-1}. 

\begin{problem}[{\cite[Problem~5.4]{IMS23-1}}]\label{problem:image}
Determine the image of the TAV order function $\ord|_{\mathcal{N}} \colon \mathcal{N}\to\mathbb{N}$. 
\end{problem}

As shown in Corollary \ref{cor:smallest} and Example \ref{ex:extension}, there are several series of TAV groups $G_{k,n}$ such that $\mathcal{V}(G_{k,1})=\mathcal{V}(G_{k,n})$ holds for $n\geq2$. Thus, the TAV group $G_{k,n}~(n\geq2)$ is not the smallest for any non-fibered knots. Using this fact, we can show the following as a first step to solve  Problem \ref{problem:image}. 

\begin{corollary}[{Corollary \ref{cor:image}}]\label{cor:dic33}
The set of orders of TAV groups contains $\mathrm{Im}\,\ord|_{\mathcal{N}}$ as a proper subset.
\end{corollary}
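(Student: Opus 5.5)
We need to exhibit a TAV group $G$ whose order $|G|$ is not the TAV order of any non-fibered knot. The natural candidate is a second central extension $G_{k,2}$ of a TAV group $G_{k,1}$, and the concrete choice suggested by Example \ref{ex:extension}(i) is $\mathrm{Dic}_{15}=C_{15}\rtimes C_4$ of order $60$. This group is a TAV group by the list in Section \ref{sec:2}, since $15$ is odd and not a prime power. An alternative is $C_{15}\rtimes C_6$ of order $90$, or $A_4\rtimes C_4$ of order $48$. We therefore need an order $N$ that occurs as the order of some TAV group but such that \emph{every} TAV group of order $N$ is either a nontrivial central extension $G_{k,n}$ ($n\geq2$) or otherwise cannot be minimal.

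The argument proceeds in three steps.

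First, we fix a candidate order $N$ and, using Theorem \ref{thm:main-4}, list all TAV groups of order $N$: these are the groups $G$ with $w(G)=1$ and $G'$ not a $p$-group. This is a finite group-theoretic enumeration, for instance from the small groups library \cite{GN}.

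Second, for each such $G$ we check via Proposition \ref{pro:seed} that $G$ is not a seed. That is, we find a nontrivial cyclic subgroup $C\subset Z(G)$ with $C\cap G'=\{e\}$. Then $G\cong G_{k,n}$ with $n=|C|\geq2$ and $G_{k,1}=G/C$.

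Third, we apply Corollary \ref{cor:smallest}. It gives $\MV(G)=\MV(G/C)$, so any knot $K\in\MV(G)$ also has the strictly smaller TAV group $G/C$, and hence $\ord(K)<N$. Consequently no non-fibered knot has $\ord(K)=N$, while $N$ is by construction the order of a TAV group, which proves that the inclusion is proper.

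The main obstacle is choosing $N$ so that \emph{all} TAV groups of that order are non-seeds. Orders such as $60$ fail, since $A_5$ has order $60$ and is a seed because its center is trivial. I would try candidates where only extension-type groups are TAV groups. The first candidate is $N=90$: the TAV groups of order $90$ with $G'$ not a $p$-group have $G'\supseteq C_{15}$ or similar, and I expect them all to be of the form $C_{15}\rtimes C_6$ or $C_3\times D_{15}$. For $C_3\times D_{15}$, the central $C_3$ meets $G'=C_{15}$ trivially, so it is a non-seed. If $90$ fails, I would test $N=150$ or other multiples $2n\cdot q$, enumerating the groups and checking the center condition in each case.
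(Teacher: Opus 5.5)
Your strategy is the paper's: find an order $N$ at which every TAV group is a non-seed $G_{k,n}$ with $n\geq 2$, and then use Corollary~\ref{cor:smallest} to rule out $\ord(K)=N$ for every non-fibered $K$. Your second and third steps are sound. The gap is that the whole content of the corollary is the \emph{existence} of such an $N$, and you never exhibit one; every candidate you name fails.

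\begin{itemize}
\item At $N=90$, the dihedral group $D_{45}$ is a TAV group by the list in Section~\ref{sec:2} ($45$ is odd and not a prime power). It has trivial center, so it is a seed. The generalized dihedral group $(C_3\times C_{15})\rtimes C_2$ (acting by inversion) is also a TAV seed: its commutator subgroup $C_3^2\times C_5$ is not a $p$-group and its center is trivial.
\item Incidentally, $C_{15}\rtimes C_6$ and $C_3\times D_{15}$ are the same group.
\item More generally, any $N=2m$ with $m$ odd and not a prime power is ruled out by the seed $D_m$. So $150$, and every order $2nq$ with $n$ odd, fail as well.
\item $N=48$ fails because $\mathrm{GL}_2(\F_3)$ is a TAV group whose center $\{\pm I\}$ lies in $G'=\mathrm{SL}_2(\F_3)$.
\end{itemize}
You need $4\mid N$ and an actual classification of the TAV groups at that order.

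The paper uses $N=132$ and $G=\mathrm{Dic}_{33}=C_{33}\rtimes C_4$, the second central extension of $D_{33}$. It quotes that this is the only TAV group of order $132$, which is easy to verify:
\begin{itemize}
\item A standard Sylow/normal-complement argument shows the Sylow $11$-subgroup is normal. Hence $G\cong C_{11}\rtimes Q$ with $|Q|=12$, and $Q$ acts through $\mathrm{Aut}(C_{11})\cong C_{10}$, so through a quotient of order at most $2$.
\item If the action is trivial, $G'=Q'$ is a $p$-group.
\item If the action is nontrivial, $G'=C_{11}\times Q'$. This is not a $p$-group only for $Q\in\{D_6,\mathrm{Dic}_3\}$, since $A_4$ has no subgroup of index $2$.
\item $Q=D_6$ gives $G/G'\cong C_2^2$, contradicting weight one.
\end{itemize}
So only $\mathrm{Dic}_{33}$ remains, and $\MV(\mathrm{Dic}_{33})=\MV(D_{33})$ shows $132\notin\mathrm{Im}\,\ord|_{\mathcal{N}}$. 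Without a concrete order of this kind, your proposal describes what would suffice but does not prove the corollary.
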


\begin{proof}
Let us consider the dicyclic group $G=\mathrm{Dic}_{33}$. This is a central extension of the dihedral group $D_{33}$ and the unique TAV group of order $132$ (see 
\cite[Theorem 3.1(iii)]{IMS-III}). Hence, we can conclude that the order $132$ is \textit{not} included in the image of the TAV order function $\ord|_{\mathcal{N}} \colon \mathcal{N}\to\mathbb{N}$, because $\mathcal{V}(\mathrm{Dic}_{33})=\mathcal{V}(D_{33})$ holds by Corollary \ref{cor:smallest}. This completes the proof. 
\end{proof}

The formulas for twisted Alexander polynomials of a knot for group extensions include a much richer content than we have presented here. A more detailed study will be the subject of our future research. 

\subsection*{Acknowledgments}
The authors are supported in part by 
JSPS KAKENHI Grant Numbers JP23K20799, 
JP25K07012, and JP25K17252. 

\bibliographystyle{amsplain}

\end{document}